\title{Multivariable versions of a lemma of Kaluza's}
\author{Stefan Richter}
\author{Jesse Sautel}
\address{Department of Mathematics\\ The University of Tennessee\\ Knoxville, TN 37996}
\email{srichter@utk.edu}
\email{jsautel@vols.utk.edu}
\date{\today}
\thanks{Part of this work appeared in the second author's Ph.D. dissertation, which was written under the direction of the first author}
\subjclass[2010]{Primary: 46E22; Secondary 32A05}
\keywords{Kaluza's Lemma, complete Nevanlinna-Pick kernel, renewal sequence}
\newcommand{\la}{\langle}
\newcommand{\ra}{\rangle}
\newcommand{\Bd}{\mathbb B_d}
\newcommand{\F}{\mathbb F}
\begin{document}
\bibliographystyle{plain}
\numberwithin{equation}{section}

\begin{abstract} Let $d\in \N$ and $f(z)= \sum_{\alpha\in \N_0^d} c_\alpha z^\alpha$ be a convergent multivariable power series in $z=(z_1,\dots,z_d)$. In this paper we present two conditions on the positive coefficients $c_\alpha$ which imply that $f(z)=\frac{1}{1-\sum_{\alpha\in \N_0^d} q_\alpha z^\alpha}$ for non-negative coefficients $q_\alpha$. If $d=1$, then both of our results reduce to a lemma of Kaluza's. For $d>1$ we present examples to show that our two conditions are independent of one another. It turns out that functions of the type
$$f(z)= \int_{[0,1]^d} \frac{1}{1-\sum_{j=1}^d t_j z_j} d\mu(t)$$ satisfy one of our conditions, whenever $d\mu(t) = d\mu_1(t_1) \times \dots \times d\mu_d(t_d)$ is a product of probability measures $\mu_j$ on $[0,1]$.

Our results have applications to the theory of Nevanlinna-Pick kernels.
\end{abstract}

\maketitle
\section{Introduction}

In 1928 Theodor Kaluza published the following fact about power series of one complex variable, \cite{Kaluza}:

\begin{thm}\label{Kaluza}(Kaluza's Lemma)   Let $M>0$ and let $\{c_n\}_{n \geq 0}$ be a sequence of positive real numbers with $c_0=1$. Define a sequence of real numbers $\{r_n\}_{n \ge 1}$ by $r_n : = \dfrac{c_n}{c_{n-1}}$ for each $n\in \N$.

If $\{r_n\}_{n \geq 1}$ is a non-decreasing sequence that is bounded above by $M$, then $f(z) := \displaystyle{\sum_{n=0}^\infty c_nz^n}$ converges for all $z\in B(1/M)=\{z\in\C:|z|<1/M\}$ and the coefficients  $\{q_n\}_{n\geq 1}$ defined by
$$f(z)= \frac{1}{1-\sum_{n=1}^\infty q_nz^n}$$
 are all non-negative.\end{thm}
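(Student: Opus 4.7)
The plan is to dispose of the convergence claim in one line and then prove the non-negativity of the $q_n$ by a single induction on $n$ whose only real input is the monotonicity of the ratios $\{r_n\}$. For convergence, telescoping the defining relation gives $c_n = r_1 r_2 \cdots r_n \, c_0 \leq M^n$, so $\sum c_n z^n$ converges absolutely whenever $|z| < 1/M$, and $f$ is holomorphic on $B(1/M)$.

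To pin down the $q_n$, I would equate coefficients in the identity $f(z)\bigl(1 - \sum_{k \geq 1} q_k z^k\bigr) = 1$, which forces the convolution recursion
\[
c_n = \sum_{k=1}^{n} q_k c_{n-k} \qquad (n \geq 1),
\]
or equivalently $q_n = c_n - \sum_{k=1}^{n-1} q_k c_{n-k}$ (using $c_0 = 1$). The base case is immediate, since $q_1 = c_1 > 0$. For the inductive step I would assume $q_1, \dots, q_{n-1} \geq 0$ and aim to show $q_n \geq 0$.

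The crux is to substitute $c_n = r_n c_{n-1}$ and then expand $c_{n-1}$ using the \emph{same} recursion one step down, $c_{n-1} = \sum_{k=1}^{n-1} q_k c_{n-1-k}$. This yields
\[
q_n = r_n c_{n-1} - \sum_{k=1}^{n-1} q_k c_{n-k} = \sum_{k=1}^{n-1} q_k \bigl(r_n c_{n-1-k} - c_{n-k}\bigr) = \sum_{k=1}^{n-1} q_k c_{n-1-k}\bigl(r_n - r_{n-k}\bigr),
\]
where the last equality uses $c_{n-k} = r_{n-k} c_{n-1-k}$, valid since $1 \leq k \leq n-1$.

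For each index $k$ in this range we have $n-k \leq n$, so the hypothesis that $\{r_j\}$ is non-decreasing gives $r_n - r_{n-k} \geq 0$; combined with $q_k \geq 0$ (inductive hypothesis) and $c_{n-1-k} > 0$, every summand is non-negative, hence $q_n \geq 0$. The only genuinely non-bookkeeping step is spotting the move of feeding the recursion back into itself via $c_n = r_n c_{n-1}$; after that the monotonicity of the ratios plugs in exactly where it is needed, and the upper bound $M$ plays no role in this half of the argument.
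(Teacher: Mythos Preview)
Your proof is correct and is precisely the classical argument. The paper does not give a self-contained proof of Theorem~\ref{Kaluza} (it is cited as Kaluza's 1928 result), but the paper's general monoid version, Theorem~\ref{Monoidkaluza}, specializes to exactly your computation when $M=\N_0$: there $P_n=\{n-1\}$, the auxiliary function is $r(n)=c_n/c_{n-1}$, and the key identity $g(w)=\sum_{(x,y)\in A_w}(r(w)-r(u))f(x)g(y)$ becomes your line $q_n=\sum_{k=1}^{n-1}q_k\,c_{n-1-k}(r_n-r_{n-k})$. So the approaches coincide.
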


Of course, another way of saying that $\{r_n\}$ is non-decreasing is to say that $\{c_n\}$ is logarithmically convex. But because of Theorem \ref{Kaluza} such sequences have also been called Kaluza sequences in the literature, see e.g. \cite{Kendall}, \cite{Horn},\cite{Shanbhag},\cite{Letac}, or \cite{Cohen}, \cite{Gomilko}. Sequences that satisfy the conclusion of Theorem \ref{Kaluza} have come up in the theory of renewal processes (see e.g. \cite{Kingman},\cite{Pitman}) and in the theory of complete Nevanlinna-Pick reproducing kernels (see e.g. \cite{AMcCpaper}, \cite{AMcC}).

A scaling argument can be used to reduce the statement in Theorem \ref{Kaluza} to the special case where $M=1$.  Then the boundedness  of $r_n$ by 1  easily implies that $f(z)$ is defined and analytic in the unit disc $\D$, and it satisfies $f(0)=1$. Hence $f=\frac{1}{1-g}$ for some function $g$ that is analytic in a neighborhood of 0 with $g(0)=0$. Thus the coefficient sequence $\{q_n\}$ exists, and it is the non-decreasing property of $r_n$ that implies that each $q_n$ is non-negative.
If one knows that the power series converge, then by multiplying with the denominator and adding a term one sees that the relationship between the sequences $c=\{c_n\}_{n\ge 0}$ and $q=\{q_n\}_{n\ge 0}$ is given by
$$f(z)=\sum_{n=0}^{\infty}c_nz^n = 1+ \sum_{n=0}^\infty \left(\sum_{k=0}^n c_{n-k}q_k \right)z^n \ \ \text{ or }$$
$$c_n = \delta_{n,0} +\sum_{k=0}^n c_{n-k}q_k \ \ \ \text{ for all } n\ge 0.$$
This last equation can be written as the discrete renewal equation
\begin{equation}\label{renewal 1d equa} c=\delta_0+c*q,\end{equation} where $\delta_0=\{\delta_{n,0}\}_{n \ge 0}$.
In Section \ref{sec3} we will consider a renewal equation on certain strongly graded monoids (see Equation (\ref{renewal equa})). We will establish a version of Kaluza's Lemma in that context, see Theorem \ref{Monoidkaluza}. The monoids include $\mathbb{F}_d$, the free semigroup on $d$-generators, and the multi-indices $\N_0^d$, see Corollaries \ref{KaluzaEqThm Fd} and \ref{KaluzaEqThm Nd}. By use of a symmetrization  technique the $\mathbb{F}_d$-Corollary will then be used to derive a second version of Kaluza's Lemma on Monoids (Corollary \ref{thm2 general}).

These results lead to two versions of Kaluza's Lemma for functions on the $\ell_1$-balls in $\C^d$, which will be stated now.
Naturally, in order to state our theorems it is most convenient to use multi-index notation. Let $d\in \N$ and let $\alpha=(\alpha_1, \dots, \alpha_d)\in \N_0^d$ be a multi-index. Particular examples of multi-indices are $\bm{0}=(0,\dots,0)$ and for $1\le j\le d$ we use $e_j$ for the multi-index that has 1 in the $j$-th component and 0's elsewhere.

  Furthermore, $|\alpha|=\alpha_1+\dots +\alpha_d$ and $\alpha!= \alpha_1! \cdot \dots \cdot \alpha_d!$. For $z=(z_1, \dots, z_d)\in \C^d$ we write $z^\alpha= z_1^{\alpha_1} \cdot \dots \cdot z_d^{\alpha_d}$, and with this notation for each $n\in \N$ the multinomial formula says that
$$\left(\sum_{j=1}^d z_j\right)^n= \sum_{|\alpha|=n} \frac{|\alpha|!}{\alpha!} z^\alpha.$$

Addition and subtraction of multi-indices are defined componentwise. We also define a partial order on the multi-indices. For $\alpha, \beta\in \N_0^d$ we say $\alpha \le \beta$ if and only if  $\alpha_j \leq \beta_j$ for each integer $j$ with $1 \leq j \leq d.$  We say a sequence of real numbers $\{r_\alpha\}_{\alpha \in \N_0^d}$  is non-decreasing if and only if $r_\alpha \leq r_\beta$ for all pairs of multi-indices with $\alpha \leq \beta$.

 For $\alpha \in \N_0^d, \alpha \ne \textbf{0}$ we define the immediate predecessor set $$P_\alpha= \{\alpha-e_k: 1\le k\le d \text{ and } \alpha_k >0\}.$$ Finally, the $\ell_1$-ball of $\C^d$ is denoted by
$$\mathbb{B}_d^{\ell_1}=\{z = (z_1,\ ...,\ z_d) \ :\ \|z\|_1 := |z_1| + |z_2| + ... + |z_d| < 1\}.$$
\begin{thm} \label{1} Let  $\{c_\alpha\}_{\alpha \in \N_0^d}$ satisfy $c_\textbf{0} = 1$ and $ c_\alpha > 0$ for all  $\alpha \in \N_0^d, \alpha \ne \textbf{0}$.  Define $\{r_\alpha\}_{\alpha \in \mathbb{N}^d_0}$ by
 $r_\textbf{0}  = 0$ and $$r_\alpha = \dfrac{c_\alpha}{\displaystyle{\sum_{\beta\in P_\alpha} c_{\beta}}}, \ \ \ \alpha \neq \bm{0}.$$

If $\{r_\alpha\}_{\alpha \in \mathbb{N}^d_0}$ is non-decreasing and bounded above by 1, then $f(z) = \displaystyle{\sum_{\alpha \in \mathbb{N}^d_0}c_\alpha z^\alpha}$ converges on $\mathbb{B}_d^{\ell_1}$  and there is a sequence of non-negative real numbers $\{q_\gamma\}_{\gamma \in \mathbb{N}_0^d}$ such that
$$f(z)= \frac{1}{1-\displaystyle{\sum_{\gamma \in \mathbb{N}^d_0}q_\gamma z^\gamma}} \text{ on } \mathbb B_d^{\ell_1}.$$
\end{thm}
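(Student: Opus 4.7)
My approach is to reduce Theorem \ref{1} to the monoid-level renewal result Corollary \ref{KaluzaEqThm Nd} (and hence to Theorem \ref{Monoidkaluza}) that the introduction foreshadows. There are three tasks: verify convergence of $f$ on $\mathbb{B}_d^{\ell_1}$; extract the coefficients $q_\gamma$ from the factorization $f = 1/(1 - g)$; and establish $q_\gamma \ge 0$.

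For convergence, the hypothesis $r_\alpha \le 1$ reads $c_\alpha \le \sum_{\beta \in P_\alpha} c_\beta$ for $\alpha \ne \bm{0}$. I would prove by strong induction on $|\alpha|$ the multinomial bound $c_\alpha \le |\alpha|!/\alpha!$, with the inductive step using the Pascal-type identity $\sum_{k:\alpha_k>0}(|\alpha|-1)!/(\alpha-e_k)! = |\alpha|!/\alpha!$. The multinomial theorem then gives
$$\sum_{\alpha \in \mathbb{N}_0^d} c_\alpha |z^\alpha| \;\le\; \sum_{n \ge 0} \|z\|_1^n \;=\; \frac{1}{1 - \|z\|_1}$$
on $\mathbb{B}_d^{\ell_1}$. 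To extract $q_\gamma$, observe that $f(\bm{0}) = 1$, so $1/f$ is holomorphic near the origin; setting $g := 1 - 1/f =: \sum_{\gamma \ne \bm{0}} q_\gamma z^\gamma$ and $q_{\bm{0}} := 0$ uniquely defines the coefficients, and expanding $(1-g)f = 1$ produces the renewal equation
$$c_\alpha \;=\; \delta_{\alpha,\bm{0}} \;+\; \sum_{\bm{0} < \gamma \le \alpha} c_{\alpha-\gamma}\, q_\gamma,$$
which is exactly the convolution equation $c = \delta_{\bm{0}} + c * q$ on $\mathbb{N}_0^d$.

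For non-negativity I would invoke Corollary \ref{KaluzaEqThm Nd}: the monoid $\mathbb{N}_0^d$ with length function $|\cdot|$ is strongly graded, its immediate-predecessor set agrees with the $P_\alpha$ defined here, and the hypotheses that $\{r_\alpha\}$ is non-decreasing in the coordinate order and bounded above by $1$ match the corollary's assumptions precisely. That corollary then delivers $q_\gamma \ge 0$. With non-negativity in hand, the renewal equation at $\alpha = \gamma$ yields $q_\gamma \le c_\gamma$, so $\sum_\gamma q_\gamma z^\gamma$ converges absolutely on $\mathbb{B}_d^{\ell_1}$ by comparison with $f$, and the identity $f = 1/(1-\sum q_\gamma z^\gamma)$ propagates from a neighborhood of $\bm{0}$ to the whole ball by analytic continuation. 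The essential obstacle is therefore the monoid-level statement Theorem \ref{Monoidkaluza}: Kaluza's one-variable sign analysis must be adapted to a setting in which each $\alpha$ has an entire antichain $P_\alpha$ of immediate predecessors, so that the positivity of $q_\gamma$ has to be propagated through a branching recursion rather than along a single chain.
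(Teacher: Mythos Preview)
Your proof is correct and follows the same overall strategy as the paper: define $g=1-1/f$ near the origin, read off the renewal equation $c=\delta_{\bm 0}+c*q$ for the coefficient sequences, and invoke Corollary~\ref{KaluzaEqThm Nd} to obtain $q_\gamma\ge 0$. The one substantive difference is in the convergence argument. The paper establishes absolute convergence on $\mathbb B_d^{\ell_1}$ via the algebraic identity of Lemma~\ref{lem:Sum to n}, which together with $r_\alpha\le 1$ bounds the partial sums by $1/(1-\|z\|_1)$; this detour also produces the de Branges--Rovnyak representation of Lemma~\ref{lem:b alpha}, used later in Section~\ref{sec:repkernels}. Your inductive multinomial bound $c_\alpha\le |\alpha|!/\alpha!$ (via the Pascal identity $\sum_{k:\alpha_k>0}(|\alpha|-1)!/(\alpha-e_k)! = |\alpha|!/\alpha!$) is a cleaner route to the same estimate, and is in fact the argument the paper itself uses for Theorem~\ref{2}. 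Your final observation that $q_\gamma\le c_\gamma$ once non-negativity is known, and hence that $\sum q_\gamma z^\gamma$ converges absolutely on all of $\mathbb B_d^{\ell_1}$, makes explicit a point the paper's proof leaves implicit.
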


We note for $\alpha \ne 0$ we have $P_\alpha \ne \emptyset$ and hence the denominator of $r_\alpha$ is strictly positive. Thus, $r_\alpha$ has been well-defined.

\begin{thm}\label{2}  Suppose $\{c_\alpha\}_{\alpha \in \N_0^d}$ satisfies $c_\textbf{0} = 1$ and $ c_\alpha > 0$ for all other $\alpha \in \N_0^d$. If for all $\alpha\in \N_0^d$ and all $i,j\in \N$ with $1\le i,j \le d$ we have
 $c_\alpha \leq \dfrac{|\alpha|!}{\alpha!}$ and
 \begin{equation*}
  \dfrac{c_{\alpha + e_i}c_{\alpha + e_j}}{c_\alpha c_{\alpha + e_i + e_j}} \leq
    \begin{cases}
       \dfrac{|\alpha| + 1}{|\alpha| + 2} \ \ \ \ \ \ \ \text{ if } i\ne j\\
       \\
       \dfrac{(|\alpha|+1)(\alpha_i + 2)}{(|\alpha|+2)(\alpha_i+1)} \text{ if } i= j
    \end{cases}
\end{equation*}
    then $f(z) = \displaystyle{\sum_{\alpha \in \mathbb{N}^d_0}c_\alpha z^\alpha}$ converges on $\mathbb{B}_d^{\ell_1}$ and there is a sequence of non-negative real numbers $\{q_\gamma\}_{\gamma \in \mathbb{N}_0^d}$ such that
$$f(z)= \frac{1}{1-\displaystyle{\sum_{\gamma \in \mathbb{N}^d_0}q_\gamma z^\gamma}} \text{ on } \mathbb B_d^{\ell_1} .$$ \end{thm}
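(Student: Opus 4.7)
The plan is to derive Theorem \ref{2} from the free semigroup version Corollary \ref{KaluzaEqThm Fd} via the symmetrization argument announced in the introduction. Let $\pi\colon \F_d \to \N_0^d$ be the abelianization map that sends a word $w = w_1\cdots w_n$ to the multi-index of its letter counts, so that each fiber $\pi^{-1}(\alpha)$ has cardinality $|\alpha|!/\alpha!$. Equidistribute $c_\alpha$ across this fiber to obtain a lift
$$\tilde c_w \;:=\; \frac{c_{\pi(w)}}{|w|!/\pi(w)!}, \qquad w \in \F_d.$$
Two immediate consequences: $\sum_{\pi(w)=\alpha}\tilde c_w = c_\alpha$, and the normalizing hypothesis $c_\alpha \le |\alpha|!/\alpha!$ of Theorem \ref{2} is exactly $\tilde c_w \le 1$ for all $w$.

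The next step is to identify the ratios driving the $\F_d$-Kaluza hypothesis. For a nonempty word $w = w' i$ with final letter $i$ and prefix $w'$, a direct factorial computation gives
$$r_w \;:=\; \frac{\tilde c_w}{\tilde c_{w'}} \;=\; \frac{c_{\alpha+e_i}\,(\alpha_i+1)}{c_\alpha\,(|\alpha|+1)}, \qquad \alpha := \pi(w'),$$
and in particular $r_w$ depends only on the pair $(\alpha, i)$; denote this common value by $R(\alpha, i)$. Because $\tilde c$ is fiber-constant, the monotonicity hypothesis of Corollary \ref{KaluzaEqThm Fd} boils down to $R(\alpha, i) \le R(\alpha+e_j, i)$ for every $\alpha \in \N_0^d$ and $i,j \in \{1,\dots,d\}$. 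Substituting the formula for $R$, this single inequality unwinds, after cancellation of common factors, into exactly the two displayed cases of Theorem \ref{2}: case $i \ne j$ produces $\frac{c_{\alpha+e_i}c_{\alpha+e_j}}{c_\alpha c_{\alpha+e_i+e_j}} \le \frac{|\alpha|+1}{|\alpha|+2}$, while case $i = j$ produces $\frac{c_{\alpha+e_i}^2}{c_\alpha c_{\alpha+2e_i}} \le \frac{(\alpha_i+2)(|\alpha|+1)}{(\alpha_i+1)(|\alpha|+2)}$. The accompanying upper bound $R \le 1$ is then automatic: iterating a fixed letter gives $\tilde c_{wi^n} \ge \tilde c_w \cdot R(\pi(w), i)^n$, and since $\tilde c_w > 0$ while $\tilde c \le 1$ uniformly, letting $n \to \infty$ forces $R(\pi(w), i) \le 1$.

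Corollary \ref{KaluzaEqThm Fd} now yields nonnegative coefficients $\tilde q_w$ with $F(Z) := \sum_w \tilde c_w Z^w = (1 - \sum_w \tilde q_w Z^w)^{-1}$ in the algebra of formal power series in non-commuting indeterminates, valid on a suitable ball. Abelianizing via $Z_k \mapsto z_k$ termwise in the Neumann series $\sum_{n \ge 0}(\sum_w \tilde q_w Z^w)^n$, and setting $q_\alpha := \sum_{\pi(w)=\alpha} \tilde q_w \ge 0$, collapses each fiber and produces the desired factorization $f(z) = (1 - \sum_\alpha q_\alpha z^\alpha)^{-1}$. Convergence of both sides on $\mathbb B_d^{\ell_1}$ follows from the uniform bound $\tilde c_w \le 1$ via $\sum_\alpha c_\alpha |z|^\alpha = \sum_w \tilde c_w \prod_{k=1}^{|w|}|z_{w_k}| \le \sum_{n \ge 0}\|z\|_1^n = (1-\|z\|_1)^{-1}$. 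The main obstacle is the algebraic matching in the middle step: the factor $(\alpha_i+1)/(|\alpha|+1)$ that accompanies $c_{\alpha+e_i}/c_\alpha$ in $R$ transforms differently under $\alpha \mapsto \alpha + e_j$ depending on whether $j = i$ (so $\alpha_i$ also increments) or $j \ne i$ (it does not), and tracking these two cases is the technical heart of the reduction, even though each individual manipulation is routine bookkeeping.
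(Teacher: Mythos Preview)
Your argument is correct and follows the same route as the paper: lift $c$ to $\tilde c$ on $\F_d$ by equidistributing over fibers of the abelianization map, verify that the ratio condition of Corollary~\ref{KaluzaEqThm Fd} for $\tilde c$ is exactly the two-case inequality of Theorem~\ref{2}, then push the resulting nonnegative $\tilde q$ forward by summing over fibers. The paper packages the last step as the general symmetrization Theorem~\ref{2nd renewal pos}/Corollary~\ref{thm2 general} for arbitrary strongly graded monoids and then specializes to $M=\N_0^d$, whereas you carry out that specialization by hand; the content is the same. One small remark: your paragraph deriving $R\le 1$ is not needed, since Corollary~\ref{KaluzaEqThm Fd} has no boundedness hypothesis, and convergence on $\mathbb{B}_d^{\ell_1}$ already follows from $\tilde c_w\le 1$ as you note at the end.
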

 It appears that the hypothesis in Theorem \ref{1} will be difficult to check for many examples, as the definition of the auxiliary sequence $\{r_\alpha\}$ is fairly complicated. Nevertheless, if $\{r_\alpha\}_{\alpha\in \N_0^d}$ is any non-decreasing sequence that is bounded above by 1 and satisfies $r(\bm{0})=0$, then one can use the conditions $c_{\bm{0}}=1$ and $c_\alpha = r_\alpha (\sum_{\beta\in P_\alpha} c_\beta)$ for $\alpha \ne \bm{0}$ to inductively define $c_\alpha$. That is, because for $|\alpha|=n$ we have $|\beta|=n-1$ for all $\beta\in P_\alpha$. This leads to many examples, where the conclusion of the Theorem may be non-obvious. Furthermore, in Section \ref{SecExample} we will use this observation to  construct an example that satisfies the hypothesis of Theorem \ref{1}, but not the one of Theorem \ref{2}. In Section \ref{SecExample2} we will show that $$f(z)=\int_{[0,1]^d} \frac{1}{1-\sum_{j=1}^d t_j z_j} d\mu(t)$$ satisfies the conditions of Theorem \ref{2}, whenever $d\mu(t) = d\mu_1(t_1) \times \dots \times d\mu_d(t_d)$ is a product of probability measures $\mu_j$ on $[0,1]$. We will also see that for $d\mu(s,t)=dsdt$ the function $f$ does not satisfy the hypothesis of Theorem \ref{1}. Thus, Theorems \ref{1} and \ref{2} are independent of one another.

 In Section \ref{sec:repkernels} we will note that our theorems lead to sufficient conditions for  reproducing kernels to be so-called complete Nevanlinna-Pick kernels. We will then give an example to illustrate what the underlying Hilbert function spaces may look like.


\section{Preliminaries on graded monoids}\label{Sec2}
Recall that a semi-group is a set $M$ with an operation $M\times M\to M, (x,y)\to xy$ that satisfies the associative law.
A monoid is a semi-group $M$ with identity $e$ satisfying $ex=xe=x$ for all $x\in M$. We will say a monoid $M$ is  strongly graded (by $\N_0$), if there is a collection of mutually disjoint subsets $\{M_n\}_{n\in \N_0}$ such that $M=\bigcup_{n=0}^\infty M_n$ and $M_nM_m= M_{n+m}$ for all $n,m \in \N_0$. On strongly graded monoids we can define the length function $|\cdot|:M \to \N_0$ by $|w|=n$, if $w\in M_n$. We note that the condition $M_nM_m=M_{n+m}$ implies that this function is a homomorphism.

We will now assume that $M$ is a strongly graded monoid satisfying two further conditions:
\begin{description}
  \item[TK] the length function has a trivial kernel: $|w|=0$, if and only if $w=e$ and
  \item[RC]  the following (right) cancellation law holds: if $x,y\in M_1$ and $w\in M$ such that $xw=yw$, then $x=y$.
\end{description}
By definition  condition TK is  equivalent to saying that $M_0=\{e\}$, and it implies for strongly graded monoids that whenever $w\in M$ and $n\in \N_0$, then $w\in M_n$ if and only if there are $x_1, \dots , x_n \in M_1$ such that $w=x_1 \dots x_n$. Of course, in general such a representation may not be unique.

\begin{exa} $\N_0^d$, the multi-indices. In this case we take $e=\textbf{0}$, the semi group operation is componentwise addition of multi-indices, and for each $n \in \N_0$ we can take $M_n=\{\alpha\in \N_0^d: |\alpha|=n\}$, where $|\cdot|$ is the usual length function of multi-indices. It is easily checked that this defines a strong grading that satisfies conditions TK and RC as well. We caution that the commutativity of the semigroup operation implies that the right cancellation law only holds with elements in $x,y\in M_1$.
\end{exa}
\begin{exa} $M=\N_0^{\aleph_0}=\{(\alpha_1, \alpha_2, \dots ): \alpha_j\in \N_0 \text{ and } \exists \ N\ :  \alpha_j=0 \text{ for all } j\ge N\}$. Again we use componentwise addition as the semi-group operation. Since all sequences are eventually 0, the length function is well-defined as the sum of the components. Then $M_1=\{e_j: j\in \N\}$ is an infinite set.
\end{exa}

\begin{exa} Let $A$ be a non-empty set, and let $\F(A)$ be the free monoid over the alphabet $A$. That is $\F(A)$ is the collection of all words of finite length that can be formed with letters from $A$ together with the empty word $\bm{0}$. Thus
$$\F(A)= \{\bm{0}\} \cup \{w_{1}w_{2} \dots w_n: n\in \N \text{ and } w_1, \dots, w_n \in A\}.$$
The semi-group operation is concatenation of words $(u,v) \to uv$ and $e=\bm{0}$.
If $w\in \F(A)$, then it has a unique representation of the type $w=w_{1}w_{2} \dots w_n$, where $w_1, \dots, w_n \in A$ and we can define the length of $w$ by $|w|=n$. Then we set $M_n=\{w\in \F_d:|w|=n\}$ and it is now easy to see that $\{M_n\}_{n\in \N_0}$ induces a strong grading on $\F(A)$ that satisfies conditions TK and RC.

If the cardinality of $A$ equals $d\in \N$, then $F(A)$ is isomorphic to $\F_d=\F(\{1,2, \dots, d\})$, the free monoid on $d$ generators.
\end{exa}
\begin{exa}If $M$ and $N$ are strongly graded monoids that satisfy TK and RC, then so is $M\times N$. In this case the semi group operation is defined by $(x,v)(y,w)=(xy,vw)$, the unit is $e=(e_M,e_N)$, and the grading is given by $$(M\times N)_n= \bigcup_{k=0}^n M_k\times N_{n-k}.$$
\end{exa}
\begin{proof} It is easy to see that $M\times N$ is strongly graded with $(M\times N)_0=\{e\}$. We check the right cancellation law RC:

 Suppose $(x,v)\in M\times N$ and $(y_1,w_1), (y_2,w_2)\in (M\times N)_1$ such that
$(y_1,w_1)(x,v)=(y_2,w_2)(x,v)$. Then $y_1x=y_2x$ and $w_1v=w_2v$. Since $(y_1,w_1)\in (M\times N)_1=(M_1\times \{e_N\})\cup (\{e_M\}\times N_1)$ we have $y_1=e_M$ or $w_1=e_N$.

If $y_1=e_M$, then $w_1v=w_2v$ and hence $1+|v|_N=|w_1v|_N=|w_2|_N+|v|_N$. Hence $|w_2|_N=1$ and  by the right cancellation law in $N$ we have $w_1=w_2$. Furthermore,  $(y_2,w_2)\in (M\times N)_1$ now implies $y_2=e_M$. Hence $(y_1,w_1)= (y_2,w_2)$. The case $w_1=e_N$ follows by symmetry.
\end{proof}
If $M$ is a strongly graded monoid, then the universal property of the free monoid $\F(M_1)$ implies that there exists a homomorphism \begin{equation}\label{universal} \varphi:\F(M_1)\to M\end{equation} such that $\varphi(x)=x$ for each $x\in M_1$. The strong grading of $M$ implies that $\varphi$ is onto, and hence $M$ is isomorphic to the quotient  $\F(M_1)/\sim$, where the equivalence relation $\sim$ is defined by $w\sim v$, if and only if $\varphi(w)=\varphi(v)$.  We note that it is easy to check that $\varphi$ (and hence $\sim$) preserves the length functions, $|\varphi(w)|_M=|w|_{\F(M_1)}$ for all $w\in \F(M_1)$. If $M=\N_0^d$, then one can take $\varphi: \F_d \to \N_0^d$ is given by the usual correspondence between words $w=w_1 w_2 \dots w_n\in \F_d$ and multi-indices $\alpha=(\alpha_1, \dots, \alpha_d)\in \N_0^d$: $\varphi(w)=\alpha$, if $\alpha_k=$ the number of times the letter $k$ is used in the word $w$.
We will use the map $\varphi$ in Section \ref{sec:thm2} in order to prove Theorem \ref{2}.

We define  a (right) partial order on such an $M$ by saying $x\le y$, if there is $z\in M$ such that $xz=y$. Furthermore, we say a function $r: M\to \R$ is (right) non-decreasing, if $r(x)\le r(y)$, whenever $x\le y$. For later reference we record a simple lemma. We omit the easy inductive proof.

\begin{lem} \label{right increasing} If $r:M\to \R$ satisfies $r(e)=0$, then $r$ is right non-decreasing, if and only if $r(xv)\le r(xvy)$ for all $x,y\in M_1$ and all $v\in M$.
\end{lem}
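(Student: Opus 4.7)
The plan is to handle the two implications separately: the forward direction is essentially definitional, and the reverse direction follows by a routine induction on the length of the ``extension'' element $w$ for which $b = aw$.

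The forward direction is immediate. If $r$ is right non-decreasing, then for any $x, y \in M_1$ and $v \in M$, the element $y \in M$ itself witnesses $xv \le (xv)y = xvy$ in the right partial order, so $r(xv) \le r(xvy)$.

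For the reverse direction, assume the adjacency hypothesis. Given $a \le b$, I write $b = aw$ and induct on $n = |w|$. The base case $n = 0$ is trivial by condition TK, which forces $w = e$ and so $b = a$. For $n \ge 1$, the strong grading lets me factor $w = w'y$ with $y \in M_1$ and $|w'| = n - 1$; the inductive hypothesis yields $r(a) \le r(aw')$, so it is enough to verify the single-step inequality $r(aw') \le r(aw'y)$. When $aw' \ne e$, TK together with the strong grading produces a factorization $aw' = xv$ with $x \in M_1$ and $v \in M$, and the adjacency hypothesis applied to this triple $(x,v,y)$ delivers the estimate. The remaining edge case $aw' = e$ forces $a = e = w'$, in which case the desired inequality reduces to $0 = r(e) \le r(y)$; this is supplied by the normalization $r(e) = 0$ in conjunction with the non-negativity of $r$ on $M_1$, which is automatic in the intended applications (where $r$ is a ratio of non-negative quantities).

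The argument is essentially bookkeeping rather than real mathematics: the strong grading is invoked twice, once to strip a rightmost letter off $w$ and once to extract a leftmost letter from $aw'$ so that the adjacency hypothesis becomes directly applicable. The right-cancellation axiom RC plays no role here; the only mildly delicate point is the base case $a = e$, which is why the normalization $r(e) = 0$ appears in the hypothesis.
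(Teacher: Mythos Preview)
Your argument is precisely the ``easy inductive proof'' the paper alludes to but omits, so there is no alternative approach to compare against. More to the point, you have caught something the authors glossed over: the edge case $a=e$ is a genuine gap in the \emph{statement}, not merely a bookkeeping nuisance. As literally written, the lemma is false. Take $M=\N_0$ and set $r(0)=0$, $r(n)=n-2$ for $n\ge 1$; then the adjacency condition reads $r(1+v)\le r(2+v)$ for all $v\ge 0$, which holds, yet $r(0)=0>-1=r(1)$ shows that $r$ is not right non-decreasing. The adjacency hypothesis simply cannot reach the step from $e$ to an element of $M_1$, because no product $xv$ with $x\in M_1$ can equal $e$.

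Your fix---appealing to the non-negativity of $r$ on $M_1$, which does hold in the paper's only application (Theorem~\ref{Monoidkaluza} and Corollary~\ref{KaluzaEqThm Fd}, where $r$ is defined as a ratio of positive quantities for $w\ne e$)---is exactly right. With that extra assumption your induction is complete and correct. So your proposal proves the lemma the authors actually \emph{need}; it does not, and cannot, prove the lemma exactly as stated.
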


For $w\in M$ we write $D_w=\{(u,v)\in M\times M: uv=w\}$, the decompositions of $w$, and  $P_w=\{v\in M: xv=w \text{ for some }x\in M_1\}$, the immediate predecessors of $w$.

\begin{lem} \label{Lem1} If $x,y,w\in M$, then $xy\in P_w$, if and only if there is $u\in M$ such that $uy=w$ and $x\in P_u$. Such a $u$ is necessarily unique and it satisfies $u \le w$.

Hence for $w\in M$ we have
$$A_w=\{(x,y): xy\in P_w\}=\{(x,y): \exists ! \ u\le w \text{ such that } (u,y)\in D_w \text{ and }x\in P_u\}.$$
\end{lem}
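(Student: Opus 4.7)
The plan is to prove the two directions of the equivalence separately by directly unpacking the definitions of $P_w$ and using associativity, and then to obtain the uniqueness of $u$ from the right cancellation law \textbf{RC}. The conclusion $u\le w$ is automatic from $uy=w$ (take $z=y$ in the definition of the partial order).

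For the forward implication, suppose $xy\in P_w$. By definition of $P_w$ there is some $s\in M_1$ with $s(xy)=w$. Set $u:=sx$; then associativity gives $uy=(sx)y=s(xy)=w$, and $x\in P_u$ since $sx=u$ with $s\in M_1$. Conversely, if $uy=w$ and $x\in P_u$, choose $s\in M_1$ with $sx=u$; then $s(xy)=(sx)y=uy=w$, so $xy\in P_w$.

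For the uniqueness clause, suppose $u_1,u_2\in M$ both satisfy $u_iy=w$ and $x\in P_{u_i}$, and pick $s_i\in M_1$ with $s_ix=u_i$. Then $s_1(xy)=u_1y=w=u_2y=s_2(xy)$, and \textbf{RC} applied to $s_1,s_2\in M_1$ with common right factor $xy\in M$ yields $s_1=s_2$, hence $u_1=s_1x=s_2x=u_2$. The second displayed description of $A_w$ is then just a restatement combining the equivalence with the uniqueness of $u$. The only small subtlety worth flagging is that one does not try to cancel $y$ directly (since $y$ need not lie in $M_1$ and \textbf{RC} only cancels common right factors off elements of $M_1$); instead one factors $u_i=s_ix$ with $s_i\in M_1$, which reduces uniqueness to precisely the situation \textbf{RC} addresses, with $xy$ in the role of the common right factor.
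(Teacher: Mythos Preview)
Your proof is correct and follows essentially the same argument as the paper: unpack $P_w$ to get a length-one left factor, set $u$ equal to that factor times $x$, and for uniqueness reduce to an application of \textbf{RC} with $xy$ as the common right factor. If anything, your uniqueness paragraph is slightly cleaner than the paper's, which inadvertently swaps the roles of $x$ and $y$ in that step.
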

\begin{proof} If $xy\in P_w$, then  there is a  $v\in M_1$ such that $vxy=w$. Setting $u=vx$ we have $x\in P_u$ and $uy=w$. Conversely, if $u\in M$ with $uy=w$ and $x\in P_u$, then there is $v\in M_1$ such that $vx=u$ and hence $vxy=w$. This implies $xy\in P_w$.

It is clear that any such $u$ satisfies $u\le w$. Finally, assume that $u_1x=w=u_2 x$ and $y\in P_{u_1}\cap P_{u_2}$, then there are $v_1, v_2\in M_1$ such that $v_j y= u_j$ for $j=1,2$ and hence $v_1yx=u_1x=u_2x=v_2yx$. Thus, the cancellation law implies that $v_1=v_2$ and hence $u_1=u_2$.
\end{proof}

We say that $f:M\to \C$ is locally summable, if $\sum_{|x|\le n}|f(x)|<\infty$ for each $n\in \N_0$. Of course, if $M_1$ is finite, then each $M_n$ must be finite and hence every function is locally summable.
If $f,g:M\to \C$ are locally summable, then for each $w\in M$ we let $n=|w|$ and we have $$\sum_{(u,v)\in D_w} |f(u)g(v)| \le \sum_{|u|\le n}\sum_{|v|\le n} |f(u)g(v)| <\infty.$$
Hence we can define the convolution of $f$ and $g$ by $$(f*g)(w)=\sum_{(u,v)\in D_w} f(u)g(v).$$ Let $\delta_e: M\to \C$ be defined by $\delta_e(x)=1$, if $x=e$ and $\delta_e(x)=0$ otherwise.

Given a function $f: M\to \R$ we are interested in solutions $g$ of the renewal equation \begin{equation}\label{renewal equa}f=\delta_e + f*g.\end{equation}

\begin{lem}\label{Lem2} For every locally summable $f: M\to \R$ with $f(e)=1$ the renewal equation (\ref{renewal equa}) has a unique locally summable solution $g:M\to \R$. The solution satisfies $g(e)=0$ and
\begin{equation}\label{induct}g(w)= f(w) - \sum_{(u,v)\in D_w, v\ne w} f(u)g(v) \ \  \text{ if } w\ne e. \end{equation}
\end{lem}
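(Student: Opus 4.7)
The plan is to establish existence, uniqueness, and local summability together by induction on the length $|w|$, treating (\ref{induct}) itself as the recursive definition.

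Evaluating the renewal equation at $w=e$ gives $f(e)=\delta_e(e)+(f*g)(e)$. Condition TK forces $D_e=\{(e,e)\}$, so $(f*g)(e)=f(e)g(e)=g(e)$, and since $f(e)=1$ this yields $g(e)=0$. For $w\neq e$, $\delta_e(w)=0$, so $f(w)=\sum_{(u,v)\in D_w}f(u)g(v)$. I will split off the contribution from the pair $(e,w)$: this equals $f(e)g(w)=g(w)$, and the key observation is that TK makes $(e,w)$ the unique decomposition with $v=w$, because $uw=w$ forces $|u|=0$ and hence $u=e$. Rearranging gives exactly (\ref{induct}).

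Because any $(u,v)\in D_w$ with $v\neq w$ must have $u\neq e$, hence $|v|<|w|$, formula (\ref{induct}) defines $g(w)$ in terms of values already known, so $g$ exists and is unique level by level. To verify local summability I induct on $n$, assuming $\sum_{|v|\le n-1}|g(v)|<\infty$. Each pair $(u,v)$ with $u\neq e$ and $|u|+|v|\le n$ contributes to the double sum $\sum_{|w|\le n}\sum_{(u,v)\in D_w,\,v\neq w}|f(u)||g(v)|$ exactly once, via $w=uv$, so this double sum is bounded by
$$\Bigl(\sum_{|u|\le n}|f(u)|\Bigr)\Bigl(\sum_{|v|\le n-1}|g(v)|\Bigr)<\infty,$$
and combined with (\ref{induct}) this gives $\sum_{|w|\le n}|g(w)|<\infty$.

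For uniqueness, if $g_1,g_2$ are two locally summable solutions, their difference $h=g_1-g_2$ satisfies $f*h\equiv 0$. The symmetric argument, isolating now the $u=e$ term (again the only decomposition by TK), gives $h(w)=-\sum_{(u,v)\in D_w,\,u\neq e}f(u)h(v)$, a sum involving only $h(v)$ with $|v|<|w|$; induction on $|w|$ starting from $h(e)=0$ then yields $h\equiv 0$. The only real subtlety in the whole argument is the use of condition TK to guarantee that $(e,w)$ is the unique $D_w$-pair with $v=w$ (respectively, $(w,e)$ the unique one with $u=e$ for uniqueness); without TK one could not solve (\ref{induct}) for $g(w)$ in closed form and the induction would collapse.
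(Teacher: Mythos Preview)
Your proof is correct and follows essentially the same approach as the paper's: both use TK to isolate the $(e,w)$ term in $D_w$, deduce (\ref{induct}), and run an induction on $|w|$ for existence, uniqueness, and local summability (your bookkeeping for the local-summability bound is in fact slightly cleaner). The separate uniqueness argument via $h=g_1-g_2$ is redundant once you have observed that (\ref{induct}) determines $g$ level by level, and your parenthetical ``$(w,e)$ the unique one with $u=e$'' should read $(e,w)$; these are cosmetic points only.
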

\begin{proof} First suppose that $g$ is a locally summable solution to the renewal equation. Since $D_e=\{(e,e)\}$ we have $1=f(e)=\delta_e(e)+ (f*g)(e)=1+f(e)g(e)=1+g(e)$ and hence $g(e)=0$. Note that if $uw=w$, then condition TK implies that $u=e$. Thus, if $w\ne e$, then $D_w=\{(u,v): uv=w, v\ne w\} \cup \{(e,w)\}$ is a disjoint union and hence
$$ f(w)= 0+(f*g)(w)=\sum_{(u,v)\in D_w, v\ne w} f(u)g(v) + g(w).$$
This implies that $g$ satisfies (\ref{induct}). It also shows that there is at most one solution. Indeed, we note that if $(u,v)\in D_w$ with $v\ne w$, then $|w|=|u|+|v|>|v|$. Thus, if $w\in M_n$ for some $n\in \N$, then all $v\ne w$ such that there is $u\in M$ with $(u,v)\in D_w$ must satisfy $v\in \bigcup_{j=0}^{n-1} M_j$ and the identity (\ref{induct}) can be used to define $g$ inductively (starting with $g(e)=0$). Uniqueness follows, as does existence, we just need to make sure that $g$ turns out to be locally summable (in order for the convolution to be well-defined).

 If $w\in M_1$, then $D_w=\{(e,w),(w,e)\}$, hence we have
$$g(w)= f(w)-f(w)g(e)=f(w).$$
 This together with $g(e)=0$ and the hypothesis that $f$ is locally summable implies $\sum_{|w|\le 1} |g(w)|<\infty$.

Suppose $n\ge 1$, and for all $w\in M$ with $|w|\le n$ we have already found $g(w)\in \R$ that satisfy $$\sum_{|w|\le n}|g(w)|<\infty$$ and such that $g(w)$ satisfies (\ref{induct}). Then
\begin{align*}\sum_{|w|={n+1}} |f(w)-\sum_{(u,v)\in D_w, v\ne w} & f(u)g(v)|\\ &\le \sum_{|w|= n+1} |f(w)| + \sum_{|u|\le n+1,|v|\le n}|f(u)g(v)|<\infty.
\end{align*}
Thus, for each $w\in M_{n+1}$ we can use $\ref{induct}$ to define $g(w)\in \R$ and this will satisfy $$\sum_{|w|= n+1}|g(w)|<\infty.$$
This concludes the inductive step to show that we can define a locally summable function $g$ that satisfies the renewal equation.
\end{proof}


\section{Kaluza's lemma for strongly graded Monoids}\label{sec3}
\begin{thm}\label{Monoidkaluza} Let $M$ be a strongly graded monoid that satisfies the conditions TK and RC of Section \ref{Sec2}. Let $f: M \to (0,\infty)$ be locally summable with $f(e)=1$. Define a function $r: M\to [0,\infty)$ by $r(e)=0$ and $$r(w)= \frac{f(w)}{\sum_{v\in P_w} f(v)} \text{ if } w\ne e.$$

If $r$ is right non-decreasing, then the solution $g$ of the renewal equation \ref{renewal equa} is non-negative.
\end{thm}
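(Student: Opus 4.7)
The plan is to prove $g(w)\ge 0$ by strong induction on $|w|$. The base cases $|w|\le 1$ are immediate from Lemma \ref{Lem2}: $g(e)=0$ and, for $|w|=1$, $D_w=\{(e,w),(w,e)\}$, so (\ref{induct}) gives $g(w)=f(w)-f(w)g(e)=f(w)>0$.

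For $|w|\ge 2$ the strategy is to derive a Kaluza-type identity expressing $g(w)$ as a non-negative combination of values $g(v)$ with $|v|<|w|$. The renewal equation at $w$, after separating the term with $u=e$, reads
$$f(w)=g(w)+\sum_{\substack{(u,v)\in D_w\\ u\ne e}} f(u)g(v).$$
On the other hand, the definition of $r$ gives $f(w)=r(w)\sum_{\beta\in P_w} f(\beta)$; since $|w|\ge 2$, every $\beta\in P_w$ satisfies $\beta\ne e$, so the renewal equation can be applied to each such $f(\beta)$. Swapping the order of summation produces
$$f(w)=r(w)\sum_{\beta\in P_w}\sum_{(x,v)\in D_\beta} f(x)g(v)=r(w)\sum_{(x,v)\in A_w} f(x)g(v).$$

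I would then apply Lemma \ref{Lem1} to re-parameterize $A_w$ by triples $(u,v,x)$ with $(u,v)\in D_w$, $u\ne e$, and $x\in P_u$; the uniqueness statement in that lemma makes this a bijection, so
$$\sum_{(x,v)\in A_w} f(x)g(v)=\sum_{\substack{(u,v)\in D_w\\ u\ne e}} g(v)\sum_{x\in P_u} f(x)=\sum_{\substack{(u,v)\in D_w\\ u\ne e}} \frac{f(u)}{r(u)}\,g(v),$$
where I have used $\sum_{x\in P_u} f(x)=f(u)/r(u)$ for $u\ne e$. Equating the two expressions for $f(w)$ and solving for $g(w)$ yields the key identity
$$g(w)=\sum_{\substack{(u,v)\in D_w\\ u\ne e}} \frac{r(w)-r(u)}{r(u)}\,f(u)\,g(v).$$
For every such $(u,v)$ one has $u\le w$, hence $r(u)\le r(w)$ by the right non-decreasing hypothesis; combined with $f(u)>0$, $r(u)>0$, and $g(v)\ge 0$ (by induction, since $|v|<|w|$), this gives $g(w)\ge 0$.

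The main obstacle is verifying carefully the re-parameterization of the double sum $\sum_{\beta\in P_w}\sum_{(x,v)\in D_\beta}$ via Lemma \ref{Lem1}: the uniqueness of $u$ asserted there, which ultimately rests on the right cancellation law RC, is precisely what guarantees a bijection and prevents over- or undercounting. Once this bookkeeping is in place, the identity above reveals the Kaluza mechanism almost verbatim from the one-variable case: the telescoping difference $r(w)-r(u)$ furnishes the non-negative weights.
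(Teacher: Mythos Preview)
Your argument is correct and follows essentially the same route as the paper: induction on $|w|$, expansion of $f(w)=r(w)\sum_{\beta\in P_w}f(\beta)$ via the renewal equation applied to each $f(\beta)$, and use of Lemma~\ref{Lem1} (whose uniqueness clause, resting on RC, gives the needed bijection) to match the resulting sum against $\sum_{(u,v)\in D_w,\,u\ne e} f(u)g(v)$. The only cosmetic difference is that the paper stops at $g(w)=\sum_{(x,y)\in A_w}\bigl(r(w)-r(u(x,y))\bigr)f(x)g(y)$, whereas you collapse the inner sum over $x\in P_u$ via $\sum_{x\in P_u}f(x)=f(u)/r(u)$ to obtain the equivalent $g(w)=\sum_{(u,v)\in D_w,\,u\ne e}\frac{r(w)-r(u)}{r(u)}f(u)g(v)$; both formulas yield the same non-negativity conclusion.
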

Note that if $w\ne e$, then $P_w\ne \emptyset$. Hence the positivity of $f$ implies that $r$ is well-defined in all cases.
\begin{proof} We assume that $r$ is right non-decreasing. Since $M=\bigcup_{n=0}^\infty M_n$ we can use induction on $n$ and the inductive formula from Lemma \ref{Lem2} to show that $g$ is non-negative. The case $n=0$ follows immediately since $M_0=\{e\}$ and $q(e)=0$. We also check the case $n=1$. If $w\in M_1$, then we saw in the proof of Lemma \ref{Lem2} that $g(w)=f(w)>0$.

Now assume that $n\in \N$ and that $g(v)\ge 0$ for all $v\in M$ with $|v|\le n$.
Let $|w|={n+1}$. We have to show that $g(w)\ge 0$.

Note for later that for all $u\in M, u \ne e$ we have
\begin{equation}\label{rf}f(u)= r(u)\sum_{x\in P_u}f(x).\end{equation}
For $u\in P_w$ set $p(u)=\frac{f(u)}{\sum_{v\in P_w} f(v)}$. Then $\sum_{u\in P_w} p(u)=1$ and $r(w)f(v)=f(w)p(v)$.

If $v\in P_w$, then since $|w|\ge 2$  we have $v\ne e$ and hence we can use the renewal equation to conclude that
\begin{equation}\label{renewal}f(v)=\sum_{(x,y)\in D_v}f(x)g(y).\end{equation}

Hence by Lemma \ref{Lem2}
\begin{align*}g(w)
&= f(w)- \sum_{(u,y)\in D_w, y\ne w}f(u)g(y)\\
&=\sum_{v\in P_w} f(w)p(v)- \sum_{(u,y)\in D_w, y\ne w}f(u)g(y)\\
&=\sum_{v\in P_w} r(w)f(v) - \sum_{(u,y)\in D_w, y\ne w}r(u)\sum_{x\in P_u}f(x)g(y) \ \ \  \text{ by (\ref{rf})}\\
&=\sum_{v\in P_w} r(w) \sum_{(x,y)\in D_v}f(x)g(y) - \sum_{(u,y)\in D_w, y\ne w}r(u)\sum_{x\in P_u}f(x)g(y) \ \ \text{ by (\ref{renewal})}\\
&=\sum_{(x,y): xy\in P_w}r(w)f(x)g(y)-  \sum_{(u,y)\in D_w, y\ne w, x\in P_u}r(u)f(x)g(y)
\end{align*}
Note that both terms in this difference are positive term sums that are finite by the identities used. Hence there is no issue with rearranging the sums.
Let $A_w$ denote the set from Lemma \ref{Lem1}, and note that if $(x,y)\in A_w$, then $y\ne w$. Hence  Lemma \ref{Lem1} implies that
$$g(w)= \sum_{(x,y)\in A_w} (r(w)-r(u))f(x)g(y),$$ where we have written $u=u(x,y)$ for the element in $M$ whose existence and uniqueness was proven in Lemma \ref{Lem1}. Hence $g(w)\ge 0$ by the inductive hypothesis and the fact that $r$ is right non-decreasing.
\end{proof}

We collect two corollaries for the special cases where $M=\F_d$ and $M=\N_0^d$.

\begin{cor} \label{KaluzaEqThm Fd} Let $d\in \N$, $f:\F_d\to (0,\infty)$ with $f(\bm{0})=1$. If also $$\frac{f(av)}{f(v)} \le \frac{f(avb)}{f(vb)} \ \ \text{ for all } a,b, v\in \F_d, |a|=|b|=1,$$
then the solution $g: \F_d\to \R$ to the renewal equation (\ref{renewal equa}) is non-negative.
\end{cor}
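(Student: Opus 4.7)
The plan is to apply Theorem \ref{Monoidkaluza} directly with $M = \F_d$. The structural hypotheses are already in hand: Section \ref{Sec2} established that $\F_d$ is strongly graded with $(\F_d)_1 = \{1,2,\dots,d\}$ and satisfies both TK and RC. Since $(\F_d)_n$ is finite (having $d^n$ elements), every function on $\F_d$ is automatically locally summable, and the positivity and normalization hypotheses on $f$ are part of the assumptions. So the entire task reduces to verifying that the auxiliary function $r(w) = f(w)/\sum_{v \in P_w} f(v)$ is right non-decreasing.

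The key simplifying observation is that in the free monoid each word has a unique factorization into letters: if $w = w_1 w_2 \cdots w_n$ with $w_j \in (\F_d)_1$, then the only way to write $w = xv$ with $x \in (\F_d)_1$ is $x = w_1$, $v = w_2 \cdots w_n$. Consequently $P_w$ is the singleton $\{w_2\cdots w_n\}$ and
$$r(w) = \frac{f(w)}{f(w_2 w_3 \cdots w_n)} \quad \text{for } w \ne \bm{0}.$$

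Because $r(\bm{0}) = 0$, Lemma \ref{right increasing} tells us that right monotonicity of $r$ is equivalent to the inequality $r(xv) \le r(xvy)$ for all $x,y \in (\F_d)_1$ and all $v \in \F_d$. Writing this out with the formula above, the first letter of $xv$ is $x$ and the first letter of $xvy$ is also $x$, so the suffixes appearing in the denominators are $v$ and $vy$ respectively, and the inequality becomes
$$\frac{f(xv)}{f(v)} \le \frac{f(xvy)}{f(vy)},$$
which is exactly the hypothesis of the corollary (with $a = x$, $b = y$). Hence Theorem \ref{Monoidkaluza} applies and $g \ge 0$. There is really no obstacle here: the corollary is a clean specialization of Theorem \ref{Monoidkaluza}, and the only genuine content is recognizing that unique factorization in $\F_d$ collapses $P_w$ to a single element, which in turn makes the abstract monotonicity condition on $r$ coincide with the stated ratio inequality.
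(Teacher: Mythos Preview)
Your proof is correct and follows essentially the same route as the paper: apply Theorem \ref{Monoidkaluza}, invoke Lemma \ref{right increasing} to reduce to checking $r(xv)\le r(xvy)$, and observe that in $\F_d$ the predecessor set $P_{av}$ is the singleton $\{v\}$, so the monotonicity condition becomes exactly the stated ratio inequality. The only difference is that you spell out the local summability and structural checks a bit more explicitly than the paper does.
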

\begin{proof} By Theorem \ref{Monoidkaluza} and Lemma \ref{right increasing} we have to show that $$\frac{f(av)}{\sum_{u\in P_{av}} f(u)} \le \frac{f(avb)}{\sum_{u\in P_{avb}} f(u)}$$ for all $a,b, v\in \F_d, |a|=|b|=1.$ But in $\F_d$ we have $P_{av}=\{v\}$ and $P_{avb}=\{vb\}$. The result follows.
\end{proof}
\begin{cor} \label{KaluzaEqThm Nd} Let $d\in \N$, $f:\N_0^d \to (0,\infty)$ with $f(\bm{0})=1$. Define $r:\N_0^d\to [0,\infty)$ by $r(\bm{0})=0$ and $$r(\alpha)=\frac{f(\alpha)}{\sum_{1\le k\le d, \alpha_k>0}f(\alpha-e_k)} \ \ \text{ if }\alpha \ne \bm{0}.$$

If $r(\alpha) \le r(\beta)$ for all $\alpha, \beta\in \N_0^d$ with $\alpha_j\le \beta_j$ for all $j=1, \dots, d$, then
the solution $g: \N_0^d\to \R$ to the renewal equation (\ref{renewal equa}) is non-negative.
\end{cor}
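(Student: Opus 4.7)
The plan is to derive this corollary as a direct specialization of Theorem~\ref{Monoidkaluza} to the monoid $M = \N_0^d$. The first example of Section~\ref{Sec2} already records that $\N_0^d$, under componentwise addition with the grading $M_n = \{\alpha \in \N_0^d : |\alpha| = n\}$ and generating set $M_1 = \{e_1,\dots,e_d\}$, is a strongly graded monoid satisfying conditions TK and RC. Since $M_n$ is finite for every $n$, any function $f:\N_0^d \to (0,\infty)$ is automatically locally summable, so the existence and uniqueness of the solution $g$ to the renewal equation (\ref{renewal equa}) is already guaranteed by Lemma~\ref{Lem2}.

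The next task is to match up the two formulations of the hypothesis. For $\alpha \ne \bm{0}$ the set of immediate predecessors in the monoid sense is
\[
P_\alpha = \{v \in \N_0^d : \exists\, k \in \{1,\dots,d\} \text{ with } v + e_k = \alpha\} = \{\alpha - e_k : 1 \le k \le d, \ \alpha_k > 0\},
\]
so the function $r$ defined in the corollary agrees on the nose with the one appearing in Theorem~\ref{Monoidkaluza}. Likewise, the monoid partial order $\alpha \le \beta$, defined by the existence of $\gamma \in \N_0^d$ with $\alpha + \gamma = \beta$, coincides with componentwise inequality $\alpha_j \le \beta_j$ for all $j$. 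Consequently the hypothesis that $r(\alpha) \le r(\beta)$ for all componentwise-ordered pairs $\alpha \le \beta$ is exactly the assertion that $r$ is right non-decreasing on the monoid $\N_0^d$.

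With all ingredients identified, a direct invocation of Theorem~\ref{Monoidkaluza} yields $g(\alpha) \ge 0$ for every $\alpha \in \N_0^d$, completing the proof. There is no genuine obstacle to overcome here; the substantive content of the argument has already been carried out in the monoid generality, and this corollary is simply the translation back to concrete multi-index language. The only thing worth double-checking is that the denominator defining $r(\alpha)$ is nonzero for $\alpha \ne \bm{0}$, which follows from $P_\alpha \ne \emptyset$ (pick any $k$ with $\alpha_k > 0$) together with the positivity of $f$.
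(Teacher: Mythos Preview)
Your proposal is correct and follows exactly the paper's approach: both derive the corollary by specializing Theorem~\ref{Monoidkaluza} to $M=\N_0^d$, after noting that $P_\alpha=\{\alpha-e_k:1\le k\le d,\ \alpha_k>0\}$ and that the monoid partial order coincides with componentwise inequality. You simply spell out in more detail (local summability via finiteness of $M_n$, well-definedness of $r$) what the paper leaves implicit.
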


\begin{proof} This also follows from Theorem \ref{Monoidkaluza} by noting that for $\alpha \in \N_0^d$ we have $P_\alpha=\{\alpha-e_k: 1\le k\le d \text{ and } \alpha_k>0\}$, and that the partial order that we defined on the multi-indices agrees with the partial (right) ordering that we defined on strongly graded monoids. \end{proof}


\section{The  proof of Theorem \ref{1}}\label{SecExample}

In this section we assume that $\{c_\alpha\}_{\alpha\in \N_0^d}$ is a given sequence of positive reals with $c_{\bm{0}}=1$, and that $r_\alpha$ is defined as in Theorem \ref{1}.
\begin{lem}\label{lem:Sum to n} If $n\ge 2$ and $z=(z_1,\dots, z_d)\in \C^d$, then
$$(1-\sum_{j=1}^d z_j) \sum_{|\alpha|\le n} c_\alpha z^\alpha = 1- \sum_{0<|\alpha|\le n} (1-r_\alpha)z^\alpha \sum_{\beta\in P_\alpha} c_\beta - \sum_{|\alpha|=n+1}z^\alpha \sum_{\beta\in P_\alpha}c_\beta. $$ \end{lem}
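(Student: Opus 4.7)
The identity is purely algebraic, so the plan is a direct expansion and reindexing. Let $L$ denote the left-hand side. I would split
\[
L \;=\; \sum_{|\alpha|\le n} c_\alpha z^\alpha \;-\; \sum_{j=1}^d z_j \sum_{|\alpha|\le n} c_\alpha z^\alpha,
\]
and handle each piece separately. In the first sum I would isolate the $\alpha=\bm 0$ term to pull out a $1$, and then for $\alpha\ne \bm 0$ use the defining relation $c_\alpha = r_\alpha\sum_{\beta\in P_\alpha}c_\beta$, obtaining
\[
\sum_{|\alpha|\le n} c_\alpha z^\alpha \;=\; 1 + \sum_{0<|\alpha|\le n} r_\alpha\,z^\alpha \sum_{\beta\in P_\alpha} c_\beta.
\]

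For the second sum the key move is the substitution $\gamma=\alpha+e_j$, which turns $\sum_{j=1}^d z_j \sum_{|\alpha|\le n} c_\alpha z^\alpha$ into $\sum_{0<|\gamma|\le n+1} z^\gamma \sum_{j:\gamma_j\ge 1} c_{\gamma-e_j}$. The inner sum is exactly $\sum_{\beta\in P_\gamma} c_\beta$ by the definition of $P_\gamma$, so
\[
\sum_{j=1}^d z_j \sum_{|\alpha|\le n} c_\alpha z^\alpha \;=\; \sum_{0<|\gamma|\le n+1} z^\gamma \sum_{\beta\in P_\gamma} c_\beta.
\]
I would then split this outer sum according to whether $|\gamma|\le n$ or $|\gamma|=n+1$; the second piece gives the final term in the claimed identity, and the first piece combines with $\sum_{0<|\alpha|\le n} r_\alpha z^\alpha\sum_{\beta\in P_\alpha}c_\beta$ to produce the factor $-(1-r_\alpha)$.

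Putting the two pieces together yields the claimed identity. There is no real obstacle: the only thing to be careful about is the reindexing $\gamma=\alpha+e_j$ (note that this map is not injective in $(j,\alpha)$, which is precisely why the sum over preimages reproduces $\sum_{\beta\in P_\gamma}c_\beta$), and making sure one tracks the boundary term $|\gamma|=n+1$ separately since $c_\gamma$ does not appear in the original sum for such $\gamma$. The hypothesis $n\ge 2$ is not actually needed for the algebraic identity itself (the same calculation works for $n\ge 1$), but it is harmless and will presumably be convenient in the subsequent application.
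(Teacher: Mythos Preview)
Your proof is correct and follows essentially the same route as the paper: the paper records precisely your two displayed identities (as equations (4.1) and (4.2)) and then says ``the proof now follows by simple algebra,'' which is exactly your final combination and boundary split. Your remark that the hypothesis $n\ge 2$ is not actually used is also correct.
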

\begin{proof} We have
\begin{equation}\label{equ:sum n 1}\sum_{|\alpha|\le n}c_\alpha z^\alpha =1+\sum_{0<|\alpha|\le n}r_\alpha  z^\alpha \sum_{\beta\in P_\alpha} c_\beta \end{equation} and
\begin{equation}\label{equ:sum n 2}(\sum_{j=1}^d z_j)\sum_{|\alpha|\le n}c_\alpha z^\alpha = \sum_{|\beta|\le n}\sum_{j=1}^d c_{\beta}z^{\beta+e_j}= \sum_{0<|\alpha|\le n+1} z^\alpha\sum_{\beta\in P_\alpha} c_\beta.\end{equation} The proof now follows by simple algebra.\end{proof}

\begin{lem}\label{lem:b alpha} If $r_\alpha \le 1$ for all $\alpha \in \N_0^d$, then $$\sum_{\alpha \in \N_0^d} |c_\alpha z^\alpha|<\infty$$ for all $z\in \mathbb{B}_d^{\ell_1} $. Furthermore, the non-negative coefficients $b_\alpha$ defined for $\alpha \ne \bm{0}$ by $b_\alpha= \frac{c_\alpha}{r_\alpha}(1-r_\alpha)=(1-r_\alpha) \sum_{\beta\in P_\alpha} c_\beta$ satisfy
\begin{equation}\label{equ:b alpha identity}\sum_{\alpha \in \N_0^d} c_\alpha z^\alpha = \frac{1-\sum_{\alpha \in \N_0^d, \alpha\ne \bm{0}} b_\alpha z^\alpha}{1-\sum_{j=1}^d z_j} \ \ \text{ on }\mathbb{B}_d^{\ell_1}.\end{equation}
\end{lem}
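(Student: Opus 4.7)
The plan is to apply Lemma \ref{lem:Sum to n} and pass to the limit $n\to\infty$, handling absolute convergence and the tail term separately.

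First I would prove absolute convergence. Fix $z\in\mathbb{B}_d^{\ell_1}$ and set $s_j=|z_j|$, so $\sum_j s_j<1$ and all quantities $c_\alpha s^\alpha$, $b_\alpha s^\alpha$, and $s^\alpha \sum_{\beta\in P_\alpha}c_\beta$ are non-negative (the hypothesis $r_\alpha\le 1$ is exactly what makes $b_\alpha\ge 0$). Lemma \ref{lem:Sum to n} applied at the point $s$ can then be rearranged as
\begin{equation*}
(1-\textstyle\sum_j s_j)\sum_{|\alpha|\le n}c_\alpha s^\alpha + \sum_{0<|\alpha|\le n}b_\alpha s^\alpha + \sum_{|\alpha|=n+1}s^\alpha\sum_{\beta\in P_\alpha}c_\beta =1,
\end{equation*}
with three non-negative summands on the left. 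Hence each of them is bounded by $1$; in particular $\sum_{|\alpha|\le n}c_\alpha s^\alpha\le 1/(1-\sum_j s_j)$, giving $\sum_{\alpha}c_\alpha|z^\alpha|<\infty$, and also $\sum_{\alpha\ne\bm{0}}b_\alpha|z^\alpha|\le 1$.

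Next I would take the limit as $n\to\infty$ in the identity from Lemma \ref{lem:Sum to n} evaluated at $z$. The absolute convergence just established means the left-hand side converges to $(1-\sum_j z_j)\sum_\alpha c_\alpha z^\alpha$, and the first sum on the right converges to $\sum_{\alpha\ne\bm{0}}b_\alpha z^\alpha$. For the remaining ``tail'' term $T_n(z):=\sum_{|\alpha|=n+1}z^\alpha\sum_{\beta\in P_\alpha}c_\beta$, I would use identity (\ref{equ:sum n 2}) to observe that isolating the top-degree term gives $T_n(z)=(\sum_j z_j)\sum_{|\beta|=n}c_\beta z^\beta$, and therefore
\begin{equation*}
|T_n(z)|\le \Bigl(\sum_j |z_j|\Bigr)\sum_{|\beta|=n}c_\beta s^\beta\xrightarrow[n\to\infty]{}0
\end{equation*}
because $\sum_\alpha c_\alpha s^\alpha$ converges.

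Passing to the limit therefore yields $(1-\sum_j z_j)\sum_\alpha c_\alpha z^\alpha = 1 - \sum_{\alpha\ne\bm{0}}b_\alpha z^\alpha$, and dividing by $1-\sum_j z_j$ (which is nonzero on $\mathbb{B}_d^{\ell_1}$) gives (\ref{equ:b alpha identity}). There is no serious obstacle here; the only bookkeeping point worth being careful about is organizing Lemma \ref{lem:Sum to n} so that all terms carry the right sign, which is what makes both the absolute-convergence bound and the vanishing of $T_n$ immediate consequences of a single telescoping argument.
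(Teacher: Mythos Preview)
Your proof is correct and follows essentially the same route as the paper: bound the partial sums via Lemma~\ref{lem:Sum to n} evaluated at the absolute-value point to get absolute convergence, then pass to the limit. The only cosmetic difference is that the paper takes $n\to\infty$ separately in the two intermediate identities (\ref{equ:sum n 1}) and (\ref{equ:sum n 2}) and then subtracts, whereas you take the limit directly in the combined identity of Lemma~\ref{lem:Sum to n} and therefore need to argue that the tail term $T_n(z)$ vanishes---which you do correctly by isolating the top-degree part of (\ref{equ:sum n 2}).
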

\begin{proof} Let $z\in \mathbb{B}_d^{\ell_1}$ and write $x=(|z_1|, |z_2|, \dots, |z_d|)$, then $\|z\|_1=\|x\|_1=\sum_{j=1}^dx_j<1$. Thus, for $n\in \N$ we have by the hypothesis that  $1-r_\alpha \ge 0$ and by Lemma \ref{lem:Sum to n} applied with the point $x$ that
$$\sum_{|\alpha|\le n} |c_\alpha z^\alpha|= \sum_{|\alpha|\le n} c_\alpha x^\alpha \le \frac{1}{1-\|z\|_1}<\infty.$$
Hence the series converges absolutely on $\mathbb{B}_d^{\ell_1}$. Thus, we can let $n\to \infty$ in equations \ref{equ:sum n 1} and \ref{equ:sum n 2} of the proof of Lemma \ref{lem:Sum to n} and obtain
$$\sum_{\alpha \in \N_0^d}c_\alpha z^\alpha = 1+\sum_{\alpha \in \N_0^d, \alpha \ne \bm{0}}r_\alpha  z^\alpha \sum_{\beta\in P_\alpha} c_\beta $$ and
$$ (\sum_{j=1}^d z_j)\sum_{\alpha \in \N_0^d}c_\alpha z^\alpha= \sum_{\alpha \in \N_0^d, \alpha \ne \bm{0}} z^\alpha\sum_{\beta\in P_\alpha} c_\beta.$$
The identity \ref{equ:b alpha identity} follows by subtracting the two equations and dividing by $(1-\sum_{j=1}^dz_j)$.\end{proof}
\begin{proof}[Proof of Theorem \ref{1}]
We now assume that $r_\alpha \le 1$ for all $\alpha\in \N_0^d$ and that $\{r_\alpha\}_{\alpha\in \N_0^d}$ is non-decreasing. By Lemma \ref{lem:b alpha} and the general theory of holomorphic functions of several complex variables it follows that $f(z)= \sum_{\alpha\in \N_0^d}c_\alpha z^\alpha$ is holomorphic in $\mathbb{B}_d^{\ell_1}$, and it satisfies $f(0)=1$. Then $g(z)=1-1/f(z)$ is holomorphic in a neighborhood of $0$ and hence it has a power series representation $g(z)=\sum_{\beta\in \N_0^d} q_\beta z^\beta$. The functions $f$ and $g$ satisfy the identity $$f(z)= 1+ (fg)(z).$$ If we write $c=\{c_\alpha\}_{\alpha\in \N_0^d}$ and $q=\{q_\beta\}_{\beta \in \N_0^d}$, then the sequence of power series coefficients of the function $fg$ is given by $c*q$. Thus, we see that the coefficient sequences satisfy the renewal equation $c= \delta_0+ c*q$. Hence the non-negativity of the terms of the sequence $q$ follows from Corollary \ref{KaluzaEqThm Nd}.
\end{proof}

We will now construct the promised example to show that for $d\ge 2$ Theorem \ref{1} does not follow from Theorem \ref{2}.
\begin{exa} Let $d\in \N$, $d\ge 2$ and $0<a, b <1$ and define $\{r_\alpha\}_{\alpha\in \N_0^d}$ by $r_{\bm{0}}=0$, $r_{e_1}=a$, $r_{e_2}=b$ and $r_\alpha=1$ for all other $\alpha\in \N_0^d$. Then  $r_\alpha \le 1$ for all $\alpha$ and $\{r_\alpha\}_{\alpha\in \N_0^d}$ is non-decreasing.

Inductively define $\{c_\alpha\}_{\alpha\in \N_0^d}$ by $c_{\bm{0}}=1$ and if $|\alpha|\ne 0$ set $c_\alpha=r_\alpha \sum_{\beta\in P_\alpha}c_\beta$. Then $\{c_\alpha\}_{\alpha\in \N_0^d}$ satisfies the hypothesis of Theorem \ref{1}.

Next compute
$$c_{e_1}=r_{e_1}c_{\bm{0}}=a, \ \ c_{e_2}=r_{e_2}c_{\bm{0}}=b,\ \ c_{2e_1}=r_{2e_1}c_{e_1}=a \ \ \text{ and }$$
$$ c_{e_1+e_2}=r_{e_1+e_2}(c_{e_1}+c_{e_2})=a+b, \ \ c_{2e_1+e_2}=r_{2e_1+e_2}(c_{e_1+e_2}+c_{2e_1})=2a+b.$$

The sequence $\{c_\alpha\}_{\alpha\in \N_0^d}$ does not satisfy the hypothesis of Theorem \ref{2} whenever $a\ne b$. Without loss of generality we assume $a<b$. Then we take $\alpha=e_1 $, $i=1, j=2$ in the hypothesis of Theorem \ref{2}:
$$\frac{c_{\alpha+e_1}c_{\alpha+e_2}}{c_\alpha c_{\alpha+e_1+e_2}}=\frac{c_{2e_1}c_{e_1+e_2}}{c_{e_1}c_{2e_1+e_2}}=\frac{a+b}{2a+b}> \frac{2}{3}= \frac{|\alpha|+1}{|\alpha|+2}.$$

This shows that the hypothesis of Theorem \ref{2} is not met.

We can also calculate the coefficients $b_\alpha$ of Lemma \ref{lem:b alpha}. Indeed, whenever $r_\alpha=1$, then $b_\alpha=0$. Hence we only calculate $b_{e_1}=1-a$ and $b_{e_2}=1-b$, and from Lemma \ref{lem:b alpha} we obtain
$$f(z)= \sum_\alpha c_\alpha z^\alpha = \frac{1-(1-a)z_1-(1-b)z_2}{1-\sum_{j=1}^dz_j}.$$
\end{exa}

The calculation just done highlights that it may sometimes be useful to start with the  sequence $b= \{b_\alpha\}_{\alpha\in \N_0^d}$ of non-negative coefficients such that $b_{\bm{0}}=0$ and $\sum_{\alpha\in \N_0^d}b_\alpha |z^\alpha| <1$ for all $z\in \mathbb{B}_d^{\ell_1}$, and then one may want to compute the sequence $c=\{c_\alpha\}_{\alpha\in \N_0^d}$ satisfying
$$f(z)=\sum_{\alpha\in \N_0^d} c_\alpha z^\alpha= \frac{1-\sum_{\alpha } b_\alpha z^\alpha}{1-\sum_{j=1}^d z_j}.$$ We note that one can express $c$ in terms of $b$ by
\begin{equation}\label{c from b}c_\alpha=\frac{|\alpha|!}{\alpha!}-\sum_{\beta\le \alpha}\frac{|\alpha-\beta|!}{(\alpha-\beta)!}b_\beta.\end{equation}
In order to prove this, write $B(z)=\sum_{\alpha\in \N_0^d } b_\alpha z^\alpha$ and  $D=\{d_\alpha\}_{\alpha\in \N_0^d}$, $d_\alpha=\frac{|\alpha|!}{\alpha!}$. By the multinomial theorem we have
 $$D(z)=\frac{1}{1-\sum_{j=1}^d z_j}=\sum_{n=0}^\infty (\sum_{j=1}^d z_j)^n= \sum_{n=0}^\infty \sum_{|\alpha|=n} d_\alpha z^\alpha.$$
The hypothesis can be written as $f=(1-B)D=D-DB$, and thus the sequences of power series coefficients satisfy $c=d-d*b$, which is exactly (\ref{c from b}).


\section{The proof of Theorem \ref{2}}\label{sec:thm2}
Suppose now that $M$ is a strongly graded monoid that satisfies the conditions TK and RC. Let $e$ be its identity. We will now use the homomorphism $\varphi: \F(M_1) \to M$ that established that $M$ is a quotient of $\F(M_1)$ (see (\ref{universal})). As a matter of convenience for notation we will use letters like $u,v,w $ to denote elements in $\F(M_1)$ (words) and $\alpha, \beta, \gamma$ for elements in $M$ (think e.g. multi-indices).

 Define a function on $M$ by
$$\ N(\alpha)= \mathrm{card}\ \varphi^{-1}(\{\alpha\}), \ \ \alpha\in M.$$

We will now assume that $M$ satisfies the additional hypothesis that $N$ only takes finite values.

For example, that will always be the case, if $M_1$ is finite, say $\mathrm{card }\ M_1=d$. Since $\varphi$ preserves the length function we have $|w|=|\alpha|$ for every $w\in \varphi^{-1}(\{\alpha\})$ and hence $N(\alpha)\le d^{|\alpha|}$.
It is also true for $M=\N_0^{\aleph_0}$. In that case $\varphi(w)$ equals the multi-index associated with the string $w$, i.e. if $w=x_1x_2 \dots x_n$ for $x_k\in \N$, then $\varphi(w)_j= $ the number of times that the integer $j$ shows up among the integers $x_1, x_2, \dots, x_n$. Thus, if $\alpha\in \N_0^{\aleph_0}$ is supported in $\{1,\dots, d\}$, then as above $N(\alpha)\le d^{|\alpha|}$.

For $f: \F(M_1) \to \R$ define the "symmetrization of $f$" by $Sf: M\to \R$, $$Sf(\alpha)=\sum_{w\in \varphi^{-1}(\{\alpha\})} f(w).$$
\begin{lem} Let $f, g:\F(M_1)\to \R$ be locally summable. Then $Sf$ and $Sg$ are locally summable and
$$S(f*g)=(Sf)*(Sg).$$
\end{lem}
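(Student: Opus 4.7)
The plan is to handle the two assertions separately, with the convolution identity following from a straightforward reindexing based on the fact that $\varphi$ is a length-preserving surjective homomorphism.

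For local summability of $Sf$, I would use that every $w\in\varphi^{-1}(\{\alpha\})$ satisfies $|w|=|\alpha|$, and that since $\varphi$ is onto, the collection $\{\varphi^{-1}(\{\alpha\}):|\alpha|\le n\}$ partitions $\{w\in\F(M_1):|w|\le n\}$. Consequently
\begin{equation*}
\sum_{|\alpha|\le n}|Sf(\alpha)|\ \le\ \sum_{|\alpha|\le n}\sum_{w\in\varphi^{-1}(\{\alpha\})}|f(w)|\ =\ \sum_{|w|\le n}|f(w)|\ <\ \infty,
\end{equation*}
and the analogous bound holds for $Sg$. This also shows that the positive-term rearrangements appearing in the next step are absolutely convergent.

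For the convolution identity, I would unfold both sides as iterated sums indexed by pairs $(u,v)\in\F(M_1)\times\F(M_1)$. On the one hand,
\begin{equation*}
S(f*g)(\alpha)\ =\ \sum_{w\in\varphi^{-1}(\{\alpha\})}\ \sum_{(u,v)\in D_w}f(u)g(v)\ =\ \sum_{\substack{(u,v)\in\F(M_1)\times\F(M_1)\\ \varphi(uv)=\alpha}}f(u)g(v),
\end{equation*}
since specifying $w$ with $\varphi(w)=\alpha$ together with $(u,v)\in D_w$ is the same as specifying $(u,v)$ with $\varphi(uv)=\alpha$ and then setting $w=uv$. On the other hand,
\begin{equation*}
(Sf*Sg)(\alpha)\ =\ \sum_{(\beta,\gamma)\in D_\alpha}\ \sum_{\substack{u\in\varphi^{-1}(\{\beta\})\\ v\in\varphi^{-1}(\{\gamma\})}}f(u)g(v)\ =\ \sum_{\substack{(u,v)\in\F(M_1)\times\F(M_1)\\ \varphi(u)\varphi(v)=\alpha}}f(u)g(v).
\end{equation*}
Because $\varphi$ is a homomorphism, the condition $\varphi(u)\varphi(v)=\alpha$ is equivalent to $\varphi(uv)=\alpha$, so both displayed sums agree and the claim follows.

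There is no real obstacle here beyond bookkeeping: the only subtle point is justifying the rearrangement in the second step, which is taken care of by applying the local summability estimate from the first step to $|f|$ and $|g|$, since for fixed $\alpha$ the pertinent terms are dominated by $\sum_{|u|+|v|=|\alpha|}|f(u)||g(v)|<\infty$.
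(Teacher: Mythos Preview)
Your proof is correct and follows essentially the same approach as the paper: both expand $S(f*g)(\alpha)$ and $(Sf*Sg)(\alpha)$ as sums over pairs $(u,v)\in\F(M_1)\times\F(M_1)$ and identify them via the homomorphism property $\varphi(uv)=\varphi(u)\varphi(v)$. Your treatment is, if anything, slightly more careful than the paper's, since you explicitly justify the local summability of $Sf$ by the partitioning argument and note that this controls the rearrangements in the convolution step.
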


\begin{proof} The finiteness assumption $N(\alpha)<\infty$ and the fact that $\varphi$ preserves the length functions easily implies that $Sf$ is locally summable, whenever $f$ is.

Let $\alpha \in M$, then
\begin{align*}(Sf*Sg) (\alpha)&= \sum_{(\beta,\gamma)\in D_\alpha} (Sf)(\beta)(Sg)(\gamma)\\
&= \sum_{(\beta,\gamma)\in D_\alpha} \sum_{\varphi(u)=\beta}\sum_{\varphi(v)=\gamma}f(u)g(v)\\
&=\sum_{\varphi(u)\varphi(v)=\alpha} f(u)g(v)\\
&=\sum_{uv \in \varphi^{-1}(\{\alpha\})} f(u)g(v)
\end{align*}
Furthermore, if $w\in \varphi^{-1}(\{\alpha\})$, then $(f*g)(w)= \sum_{(u,v)\in D_w}f(u)g(v)$ and hence
$$S(f*g)(\alpha)= \sum_{w\in \varphi^{-1}(\{\alpha\}) }\sum_{(u,v)\in D_w}f(u)g(v)= \sum_{uv \in \varphi^{-1}(\{\alpha\})} f(u)g(v)=(Sf*Sg) (\alpha).$$
\end{proof}

\begin{thm}\label{2nd renewal pos} Let $c:M\to \R$ with $c(0)=1$ and define $f:\F(M_1)\to \R$ by  $f(w)=\frac{c(\varphi(w))}{N(\varphi(w))}$. Note that $f(\bm{0})=1$. Let $g:\F(M_1)\to \R$ be the unique solution to the $\F(M_1)$-renewal equation $f=\delta_{\bm{0}}+ f * g$.

Then $q=Sg$ satisfies the $M$-renewal equation $c=\delta_{e} + c*q$.
\end{thm}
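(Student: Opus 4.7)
The plan is to apply the symmetrization $S$ to both sides of the $\F(M_1)$-renewal equation $f=\delta_{\bm 0}+f*g$ and verify that this collapses exactly to the $M$-renewal equation $c=\delta_e+c*q$. Three small identities drive everything: $Sf=c$, $S\delta_{\bm 0}=\delta_e$, and $S(f*g)=(Sf)*(Sg)$. The last is the preceding lemma; the first two are direct checks.

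First I would record the identity $Sf=c$. By definition of $S$ and $f$, for $\alpha\in M$,
\begin{equation*}
Sf(\alpha)=\sum_{w\in\varphi^{-1}(\{\alpha\})}f(w)=\sum_{w\in\varphi^{-1}(\{\alpha\})}\frac{c(\varphi(w))}{N(\varphi(w))}=N(\alpha)\cdot\frac{c(\alpha)}{N(\alpha)}=c(\alpha),
\end{equation*}
using $\varphi(w)=\alpha$ and $\mathrm{card}\,\varphi^{-1}(\{\alpha\})=N(\alpha)<\infty$. Second, since $\varphi$ preserves the length function and $M_0=\{e\}$, $\F(M_1)_0=\{\bm 0\}$, we have $\varphi^{-1}(\{e\})=\{\bm 0\}$ and $\varphi^{-1}(\{\alpha\})\subset\bigcup_{n\ge 1}\F(M_1)_n$ for $\alpha\ne e$. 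Hence $S\delta_{\bm 0}(\alpha)=\delta_{\bm 0}(\bm 0)=1$ if $\alpha=e$ and $S\delta_{\bm 0}(\alpha)=0$ otherwise, i.e. $S\delta_{\bm 0}=\delta_e$.

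Before applying $S$ I would verify the hypotheses needed to invoke Lemma 2.7 (the $\F(M_1)$-renewal equation) and the symmetrization lemma. Local summability of $f$ on $\F(M_1)$ follows from local summability of $c$ on $M$: for any $n$,
\begin{equation*}
\sum_{|w|\le n}|f(w)|=\sum_{|\alpha|\le n}\sum_{w\in\varphi^{-1}(\{\alpha\})}\frac{|c(\alpha)|}{N(\alpha)}=\sum_{|\alpha|\le n}|c(\alpha)|<\infty,
\end{equation*}
and $f(\bm 0)=c(e)/N(e)=1$, so Lemma 2.7 produces a unique locally summable solution $g$. The symmetrization lemma then gives $S(f*g)=Sf*Sg=c*q$, where $q=Sg$ is locally summable on $M$.

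Applying $S$ to $f=\delta_{\bm 0}+f*g$ term by term and using the three identities, I obtain
\begin{equation*}
c=Sf=S\delta_{\bm 0}+S(f*g)=\delta_e+c*q,
\end{equation*}
which is the desired $M$-renewal equation. This is essentially bookkeeping once the symmetrization lemma is in hand, so I do not expect any real obstacle; the only non-trivial point is the clean verification that $Sf=c$, which is exactly why $f$ was defined with the denominator $N(\varphi(w))$ in the first place.
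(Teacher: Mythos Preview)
Your proof is correct and follows exactly the same route as the paper: verify $Sf=c$, $S\delta_{\bm 0}=\delta_e$, invoke the symmetrization lemma for $S(f*g)=Sf*Sg$, and apply $S$ to the $\F(M_1)$-renewal equation. The paper's proof is a four-line compression of what you wrote; your added detail (the explicit computation of $Sf(\alpha)$ and the check that $\varphi^{-1}(\{e\})=\{\bm 0\}$) is welcome but not new. One minor remark: your local-summability paragraph assumes $c$ is locally summable, which the theorem does not state; however, since the existence and uniqueness of $g$ is already part of the hypothesis, you need not justify it at all.
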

\begin{proof} Note that the definition of $f$ implies that for each $\alpha \in M$ $f$ is constant on $\varphi^{-1}(\{\alpha\})$. Thus it is easily seen that $Sf=c$. Hence by the linearity of $S$ and the previous Lemma
$$c=Sf=S\delta_{\bm{0}} +S(f*g)=\delta_e + Sf * Sg= \delta_e+ c*q.$$
\end{proof}
From the definition of $q=Sg$ it follows that $q$ is non-negative, whenever $g$ is non-negative. If $\mathrm{card}\ M_1=d<\infty$ we can thus use Corollary \ref{KaluzaEqThm Fd} give a sufficient condition for $q$ to be non-negative. We write that out in terms of the function $c$ and obtain
\begin{cor} \label{thm2 general} Let $M$ be a strongly graded monoid that satisfies conditions TK and RC and assume that $\mathrm{card}\ M_1=d<\infty$. Let $c:M\to (0,\infty)$ be a function with $c(e)=1$. If for all $\alpha,\beta,\gamma \in M$ with $\alpha, \gamma\in M_1$ we have
$$\frac{c(\alpha \beta)c(\beta \gamma)}{c(\beta)c(\alpha \beta \gamma)} \le \frac{N(\alpha \beta)N(\beta \gamma)}{N(\beta)N(\alpha \beta\gamma)},$$
then the unique solution $q:M\to \R$ to the renewal equation $c=\delta_e+c*q$ is non-negative.
\end{cor}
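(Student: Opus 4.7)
The plan is to combine Theorem \ref{2nd renewal pos} with Corollary \ref{KaluzaEqThm Fd}. First I would lift the problem from $M$ to the free monoid $\F(M_1)$ by defining $f:\F(M_1)\to (0,\infty)$ by $f(w)=c(\varphi(w))/N(\varphi(w))$. Since $\varphi(\bm{0})=e$ and $N(e)=1$ (the empty word is the only preimage of $e$), we have $f(\bm{0})=c(e)=1$. Because $\mathrm{card}\ M_1=d<\infty$, every $M_n$ is finite and $f$ is locally summable. Let $g:\F(M_1)\to \R$ be the unique locally summable solution of the free-monoid renewal equation $f=\delta_{\bm{0}}+f*g$. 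By Theorem \ref{2nd renewal pos}, the symmetrization $q=Sg$ solves the $M$-renewal equation $c=\delta_e+c*q$, so uniqueness (Lemma \ref{Lem2}) identifies this $Sg$ with the $q$ in the statement of the corollary.

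Next I would verify the monotonicity hypothesis of Corollary \ref{KaluzaEqThm Fd} for $f$. For $a,b\in M_1$ (viewed as length-one words in $\F(M_1)$) and $v\in \F(M_1)$, set $\alpha=\varphi(a)\in M_1$, $\gamma=\varphi(b)\in M_1$, and $\beta=\varphi(v)\in M$. Then $\varphi(av)=\alpha\beta$, $\varphi(vb)=\beta\gamma$, and $\varphi(avb)=\alpha\beta\gamma$, so a direct computation gives
\[
\frac{f(av)}{f(v)}=\frac{c(\alpha\beta)\,N(\beta)}{c(\beta)\,N(\alpha\beta)},\qquad
\frac{f(avb)}{f(vb)}=\frac{c(\alpha\beta\gamma)\,N(\beta\gamma)}{c(\beta\gamma)\,N(\alpha\beta\gamma)}.
\]
Cross-multiplying, the inequality $f(av)/f(v)\le f(avb)/f(vb)$ is algebraically equivalent to
\[
\frac{c(\alpha\beta)c(\beta\gamma)}{c(\beta)c(\alpha\beta\gamma)}\le\frac{N(\alpha\beta)N(\beta\gamma)}{N(\beta)N(\alpha\beta\gamma)},
\]
which is exactly the standing hypothesis. (All factors of $f$ and $c$ are strictly positive, so the cross-multiplication is legitimate.) Hence Corollary \ref{KaluzaEqThm Fd} applies and yields $g\ge 0$ on $\F(M_1)$.

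Finally, since $g\ge 0$, the symmetrization $q(\alpha)=Sg(\alpha)=\sum_{w\in \varphi^{-1}(\{\alpha\})}g(w)$ is a sum of non-negative terms and is therefore non-negative on $M$. This establishes the corollary. The only step that requires any thought is the translation of the hypothesis into the monotonicity condition for $f$ on $\F(M_1)$; everything else is an application of results already in hand. Because $\alpha$ and $\gamma$ range over $M_1$ precisely as $a$ and $b$ range over single letters of $\F(M_1)$, there is no loss of generality when one passes back to the monoid $M$, so no additional symmetrization or case analysis is needed.
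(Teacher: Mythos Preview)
Your proposal is correct and follows essentially the same approach as the paper: you lift to $\F(M_1)$ via $f(w)=c(\varphi(w))/N(\varphi(w))$, invoke Theorem~\ref{2nd renewal pos} so that $q=Sg$, and then verify that the hypothesis of the corollary is exactly the Corollary~\ref{KaluzaEqThm Fd} condition for this $f$, yielding $g\ge 0$ and hence $q\ge 0$. The paper states this in a single sentence preceding the corollary; your write-up simply makes the translation explicit.
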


We note that there is a little bit of room to play in this set-up. In Theorem \ref{2nd renewal pos} we used the most natural way to define $f:\F(M_1)\to \C$ from a given $c:M\to \C$ so that $c=Sf$. A different choice of $f$ could lead to a different version of Corollary \ref{thm2 general} and of Theorem \ref{2}.

\begin{proof}[Proof of Theorem \ref{2}]
We start by showing that $c_\alpha \le \frac{|\alpha|!}{\alpha!}$ implies the absolute convergence of the power series $\sum_\alpha c_\alpha z^\alpha$ in $\mathbb{B}_d^{\ell_1}$. The proof is similar to the earlier argument. If $z\in \mathbb{B}_d^{\ell_1}$, then we set $x=(|z_1|, \dots, |z_d|)$ and observe $\|z\|_1=\sum_{j=1}^d x_j$ and
$$\sum_\alpha |c_\alpha z^\alpha| = \sum_\alpha c_\alpha x^\alpha\le \sum_\alpha  \frac{|\alpha|!}{\alpha!} x^\alpha= \frac{1}{1-\|z\|_1}<\infty.$$
Thus, the power series for $f$ converges and hence as in the proof of Theorem \ref{1} the coefficients $q_\alpha$ exist and are given by the solution to the renewal equation, and we will finish the proof by showing that the second condition in Theorem \ref{2} reduces to the hypothesis of Corollary \ref{thm2 general} when $M=\N_0^d$.

In that case for a multi-index $\alpha$ the quantity $N(\alpha)$ equals the number of words in $\F_d$ that give rise to the multi-index $\alpha$, i.e. $N(\alpha)=\frac{|\alpha|!}{\alpha!}$. The set $M_1$ equals $\{e_1, \dots, e_d\}$ and the semi-group operation is addition, hence by Corollary \ref{thm2 general} the condition equals
$$ \dfrac{c_{\beta + e_i}c_{\beta + e_j}}{c_\beta c_{\beta + e_i + e_j}} \leq \frac{N(\beta+e_i)N(\beta+e_j)}{N(\beta)N(\beta +e_i+e_j)} \ \ \text{ for all } 1\le i,j\le d, \beta\in \N_0^d,$$ and is easily seen to equal the expression in Theorem \ref{2}.
\end{proof}


\section{Examples for Theorem \ref{2}}\label{SecExample2}
 In this Section we will present a large class of examples that satisfy the hypothesis of Theorem \ref{2}, and we will see that not all of them satisfy the hypothesis of Theorem \ref{1}. We will start very general and then move towards making it more concrete.

 Suppose that for $i=1, \dots d$ we are given sequences $S_i=\{s_{i,n}\}_{n\in \N_0}$ that are Kaluza sequences and satisfy the hypothesis of Theorem \ref{Kaluza} with $M=1$. Then define $\{c_\alpha\}_{\alpha\in \N_0^d}$ by
\begin{equation}\label{Example}c_\alpha = \dfrac{|\alpha|!}{\alpha!}\displaystyle{\prod_{i=1}^ds_{i,\alpha_i}}.\end{equation} We claim that any such $\{c_\alpha\}_{\alpha\in \N_0^d}$ satisfies the hypothesis of Theorem \ref{2}.

If $d=1$, then the hypothesis of Theorem \ref{2} is seen to be equivalent to the hypothesis of Theorem \ref{Kaluza} with $M=1$, so there is nothing to show. Assume now that $d\ge 2$. It is immediately clear   that $c_\textbf{0} = 1$ and that $0 < c_\alpha \leq \dfrac{|\alpha|!}{\alpha!}$.

To help us with the computations that are required to check the remaining condition, we introduce the notation $$P_i := \displaystyle{\prod_{\substack{k=1 \\ k \neq i}}^ds_{k,\alpha_k}}; \ \ \ \ \ Q_{i,j} := \displaystyle{\prod_{\substack{k=1 \\ k \neq i,j}}^ds_{k,\alpha_k}},$$where the latter is only defined for $i \neq j$. With this, we have for any $1 \leq i,j \leq d,\ i \neq j$: $$c_\alpha = \dfrac{|\alpha|!}{\alpha!}s_{i,\alpha_i}P_i; \ \ \ \ \ c_{\alpha + e_i} = \dfrac{(|\alpha|+1)!}{\alpha!(\alpha_i+1)}s_{i,\alpha_i+1}P_i; \ \ \ \ \ c_{\alpha + 2e_i} = \dfrac{(|\alpha|+2)!}{\alpha!(\alpha_i+1)(\alpha_i+2)}s_{i,\alpha_i+2}P_i$$ $$c_{\alpha+e_i+e_j} = \dfrac{(|\alpha|+2)!}{\alpha! (\alpha_i+1)(\alpha_j+1)}s_{i,\alpha_i+1}s_{j,\alpha_j+1}Q_{i,j}  \ \ \text{ if } i\ne j.$$ Subsequently, this leads to $$\dfrac{(c_{\alpha + e_i})^2}{c_\alpha c_{\alpha+2e_i}}= \left(\dfrac{(s_{i,\alpha_i+1})^2}{s_{i,\alpha_i}s_{i,\alpha_i+2}}\right)\left(\dfrac{(|\alpha|+1)(\alpha_i + 2)}{(|\alpha|+2)(\alpha_i+1)}\right) \leq \dfrac{(|\alpha|+1)(\alpha_i + 2)}{(|\alpha|+2)(\alpha_i+1)},$$using the hypothesis that $S_i$ is a Kaluza sequence for each fixed $i$. Also, for $i \neq j$, we have $$\dfrac{c_{\alpha + e_i}c_{\alpha + e_j}}{c_\alpha c_{\alpha + e_i + e_j}} = \left(\dfrac{P_j}{s_{i,\alpha_i}Q_{i,j}}\right)\left(\dfrac{|\alpha| + 1}{|\alpha| + 2}\right)= \dfrac{|\alpha| + 1}{|\alpha| + 2},$$and we have  shown $\{c_\alpha\}$ satisfies the hypothesis of Theorem \ref{2}.

Now let $\mu_1, \dots, \mu_d$ be probability measures on $[0,1]$ and define a measure on $[0,1]^d$ by $\mu= \mu_1 \times \dots \times \mu_d$. Then we claim that the coefficients of the function
\begin{equation}\label{fprodmeasure}f(z) =\int_{[0,1]^d} \frac{1}{1-\sum_{j=1}^d t_jz_j} d\mu(t)\end{equation} are of the type as considered above. Indeed, we have
\begin{align*}
\int_{[0,1]^d} \frac{1}{1-\sum_{j=1}^d t_jz_j} d\mu(t)&= \sum_{n=0}^\infty \int_{[0,1]^d} \left(\sum_{j=1}^d t_jz_j\right)^n d\mu(t)\\
&= \sum_{n=0}^\infty \sum_{|\alpha|=n} \frac{|\alpha|!}{\alpha!}z^\alpha \int_{[0,1]^d} t^\alpha d\mu(t)  \ \ \text{ by the Multinomial Theorem}\\
&= \sum_{n=0}^\infty \sum_{|\alpha|=n} \frac{|\alpha|!}{\alpha!} z^\alpha \prod_{i=1}^d \int_{[0,1]} t_i^{\alpha_i}d\mu_i(t_i)\\
&=  \sum_{\alpha\in \N_0^d} \frac{|\alpha|!}{\alpha!} z^\alpha \prod_{i=1}^d s_{i,\alpha_i},  \ \ \text{ where }s_{i,k}= \int_{[0,1]}t^kd\mu_i(t)\\
&= \sum_{\alpha\in \N_0^d} c_\alpha z^\alpha.
\end{align*}
It remains to note that $\{\int_{[0,1]} t^n d\sigma(t)\}_{n\in \N_0}$ is a Kaluza sequence for each probability measure $\sigma$ on $[0,1]$. In fact, this final step is well-known to be true,  basically it is even contained in Kaluza's paper. Namely, it follows easily from Hoelder's inequality that $F(s) := \displaystyle{\int_0^1t^sd\sigma(t)}$ defines a log-convex function on $[0,\infty)$ (also see \cite{Convexity}, Lemma 5.1).

By taking $d = 2$ and both $\mu_1$ and $\mu_2$ to be Lebesgue measure on $[0,1]$, we can show the resulting $\{c_\alpha\}$ do not satisfy the conditions of Theorem \ref{1}, namely that the sequence $r_\alpha$ is not non-decreasing. Let us write $(m,n)$ for arbitrary $\alpha \in \mathbb{N}^2_0$. We claim that $r_{(0,2)} > r_{(1,2)}$. To compute these quantities, we note that $s_{i,n} = \dfrac{1}{n+1}$ for $i$ = 1 and 2, and subsequently that $c_{(m,n)} = \dfrac{(m+n)!}{(m+1)!(n+1)!}$. We use this to calculate $c_{(0,1)} = c_{(1,1)} = c_{(1,2)} = 1/2$ and $c_{(0,2)} = 1/3$. From there, all we need to do is employ the definition of $r_{(m,n)}$ to see: $$r_{(0,2)} = \dfrac{c_{(0,2)}}{c_{(0,1)}} = 2/3 > 3/5 = \dfrac{c_{(1,2)}}{c_{(0,2)} + c_{(1,1)}} = r_{(1,2)}.$$

Note that there are probability measures on $[0,1]^2$ such that the function $f$ in (\ref{fprodmeasure}) does not satisfy the conclusion of  Theorems \ref{1} and \ref{2}. Let $0<a<1$, and for $w\in \mathbb{B}_2$ write $\delta_w$ for the unit point mass at $w$. Then
$  \mu= \frac{1}{2}(\delta_{(a,0)}+\delta_{(0,a)})$ is a probability measure, and
$$f(z,w)=\int_{[0,1]^2}\frac{1}{1-sz-tw}d\mu(s,t)= \frac{1}{2}\left(\frac{1}{1-az}+\frac{1}{1-aw}\right)=\frac{1-\frac{a}{2}(z+w)}{(1-az)(1-aw)}.$$ Then
\begin{align*}q(z,w)&=1-\frac{1}{f(z,w)}= \frac{1-\frac{a}{2}(z+w)-(1-az)(1-aw)}{1-\frac{a}{2}(z+w)}\\
&= \frac{\frac{a}{2}(z+w)-a^2zw}{1-\frac{a}{2}(z+w)}\\
&=\sum_{n=0}^\infty (\frac{a}{2})^{n+1}(z+w)^{n+1}- \sum_{n=0}^\infty a^2zw(\frac{a}{2})^{n}(z+w)^{n}.
\end{align*}
Thus, we determine the coefficient of $z^2w$ to be $q_{(2,1)}=\frac{a^3}{8}3 - \frac{a^3}{2}<0$.


\section{Applications to reproducing kernels}\label{sec:repkernels}
For $d\in \N$ let $\mathbb B_d$ denote the $\ell_2$-ball of $\C^d$, $\mathbb B_d=\{z\in \C^d: \sum_{j=1}^d |z_j|^2<1\}$. A Hilbert function space over $\Bd$ is a Hilbert space of functions $\Bd \to \C$ such that point evaluations for all $z\in \Bd$ are continuous. Each Hilbert function space has a reproducing kernel $k=\{k_w(z)\}$ that satisfies $k_w\in \HH$ for each $w\in \Bd$ and $f(w)=\la f, k_w\ra$ for each $f\in \HH$ and $w\in \Bd$.

\begin{defn} Let $k$ be a  reproducing kernel on $\Bd$.

(a) $k$  is called a normalized complete Nevanlinna-Pick kernel, if there is an auxiliary Hilbert space $\HK$ and a function $Q:\Bd \to \HK$ such that $Q(0)=0$ and $k_w(z)=\frac{1}{1-\la Q(z),Q(w)\ra}$ for all $z,w\in \Bd.$

(b) $k$ is called a normalized de Branges-Rovnyak kernel, if there  is an auxiliary Hilbert space $\HK$ and a function $B:\Bd \to \HK$ such that $B(0)=0$ and $k_w(z)=\frac{1-\la B(z),B(w)\ra}{1-\la z,w\ra}$ for all $z,w\in \Bd.$
\end{defn}
A Hilbert function space is called a normalized complete Nevanlinna-Pick space (normalized de Branges-Rovnyak space) if its reproducing kernel is a normalized complete Nevanlinna-Pick kernel (normalized de Branges-Rovnyak kernel).
Both of these types of kernels and spaces have been studied extensively in the literature.  For de Branges-Rovnyak spaces the results are most complete if $d=1$ and $\HK$ is one dimensional, see e.g. \cite{deBRo}, \cite{Sara}, \cite{FriMash1}, \cite{FriMash2}. If $d=1$ and $\HK$ is finite dimensional, then \cite{AleMal} and \cite{GuLuoR} contain some further results. Multivariable versions of de Branges spaces have been studied for their use in so-called transfer function realizations of analytic functions in $\Bd$, see e.g. \cite{BaBoFa1}, \cite{BaBoFa2}, but they have also been studied in their own right, \cite{Jury}, \cite{JuMa}.  For complete Nevanlinna-Pick kernels many results are available for all $d\in \N$ and all separable auxiliary Hilbert spaces $\HK$, we refer to \cite{AMcCpaper} and \cite{AMcC} for the basics. However these types of reproducing kernels are the subject of ongoing research, and there are many recent publications on the topic. The intuition is that normalized complete Nevanlinna-Pick spaces share many properties with the Hardy space $H^2$ of the unit disc and its multiplier algebra $H^\infty$. For example, there is a Pick interpolation theorem, a  commutant lifting theorem, a Beurling theorem,  a representation of the contractive multipliers as transfer functions of unitary colligations, and a theorem saying that every function in a normalized complete Nevanlinna-Pick space can be written as a ratio of multipliers of the space. It is thus of interest to determine whether a given reproducing kernel is a complete Nevanlinna-Pick kernel or whether a given Hilbert function space is a complete Nevanlinna-Pick space.

If $\HH\subseteq \Hol(\Bd)$ is a Hilbert function space such that the monomials $z^\alpha$ are contained in $\HH$ and  are mutually orthogonal, then for $f=\sum_{\alpha\in \N_0^d} \hat{f}(\alpha)z^\alpha$ we have $$\|f\|^2= \sum_{\alpha \in \N_0^d} |\hat{f}(\alpha)|^2 \|z^\alpha\|^2.$$ Since $z^\alpha \in \HH$ and since point evaluations are bounded, we must have $\|z^\alpha\|\ne 0$ and we see that the reproducing kernel is of the form
$$k_w(z)=\sum_{\alpha \in \N_0^d} c_\alpha \overline{w}^\alpha z^\alpha, \ \ c_\alpha= 1/\|z^\alpha\|^2.$$
 Thus, Theorems \ref{1} and \ref{2} give sufficient conditions for such a kernel to be a normalized complete Nevanlinna-Pick kernel, and Lemma \ref{lem:b alpha} and the discussion at the end of Section \ref{SecExample} show that the conditions of Theorem \ref{1} also are  sufficient  for $k_w(z)$ to be a normalized de Branges-Rovnyak kernel.

 We give an example to illustrate what the norms may look like when the kernel is a normalized complete Nevanlinna-Pick kernel of the form $$k_w(z)= \int_{[0,1]^d} \frac{1}{1-\sum_{j=1}^d t_j z_j\overline{w}_j} d\mu(t)$$ where $\mu=\mu_1 \times \dots \mu_d$ for probability measures on $[0,1]^d$. In \cite{AHMcR1} conditions on a probability measure $\mu$ on [0,1] were given that implied that  the norm on the space with reproducing kernel $$k_w(z) = \int_{[0,1]} \frac{1}{1-t\la z,w\ra}d\mu(t)$$ is equivalent to a weighted Besov-norm of the type $\|f\|^2=|f(0)|^2+\int_{\Bd}|R^\gamma f|^2 \omega dV$. Here $R$ is the radial derivative operator, $\gamma$ is a possibly fractional order, $dV$ is Lebesgue measure on $\Bd$, and $\omega$ is a radial weight on $\Bd$ (\cite{AHMcR1}, Lemma 5.1 and Theorem 5.2). Results and methods from \cite{AHMcR1} can be used to substantially generalize the following example, if one is willing to settle for equivalence rather than equality of norms.

 We take $d=2$ and $\mu_1=\mu_2=$ Lebesgue measure on $[0,1]$. Then in Section \ref{SecExample2} we calculated  $c_{(m,n)} = \dfrac{(m+n)!}{(m+1)!(n+1)!}$. If $F$ is a function of the complex variables $w=(w_1, w_2)$, then we write $$D_{w_j}F= \frac{\partial}{\partial w_j} w_jF.$$
If $f\in \Hol(\mathbb{B}_2)$, then define $Tf:\D\times \mathbb{B}_2\times \D^2\to \C$ by $Tf(\lambda,z,w)=f(\lambda z_1w_1,\lambda z_2w_2)$ and
\begin{equation}\label{BesovNorm}\|f\|^2 = \int_{\D} \int_{\mathbb{B}_2}\int_{\D^2} |D_\lambda  D_{w_1}D_{w_2}Tf(\lambda,z,w)|^2 \frac{dA(w_1)}{\pi}\frac{dA(w_2)}{\pi}d\sigma(z) \frac{dA(\lambda)}{\pi}.\end{equation} Here $dA$ denotes Lebesgue measure on $\D$ and $d\sigma$ is the rotationally invariant probability measure on $\mathbb{B}_2$.
It is easy to see that the monomials are orthogonal to one another in the inner product that comes from this norm.
If $f(z)=z^{(m,n)}$, then
 $Tf(\lambda,z,w)= \lambda^{m+n} z^{(m,n)}w^{(m,n)}$ and $$D_\lambda D_{w_1}D_{w_2}Tf= (m+n+1)(m+1)(n+1)\lambda^{m+n} z^{(m,n)}w^{(m,n)}.$$
 Now note  that \begin{align*}\int_\D|\lambda^{n+m}|^2\frac{dA(\lambda)}{\pi}&= \frac{1}{m+n+1},\\ \int_{\mathbb{B}_2} |z^{(m,n)}|^2d\sigma(z) &= \frac{m!n!}{(m+n+1)!},  \ \ \text{ and  }\\
 \int_\D\int_\D |w^{(m,n)}|^2 \frac{dA(w_1)}{\pi}\frac{dA(w_2)}{\pi}&= \frac{1}{(m+1)(n+1)}.\end{align*}

 Then
 \begin{align*} \|z^{(m,n)}\|^2&= (m+n+1)^2(m+1)^2(n+1)^2 \frac{1}{m+n+1} \frac{m!n!}{(m+n+1)!}\frac{1}{(m+1)(n+1)}\\
 &=\frac{(m+1)!(n+1)!}{(m+n)!}\\
 &=\frac{1}{c_{(m,n)}}.
 \end{align*}
Thus, if $\HH$ is the space of all holomorphic functions $f$ on $\mathbb{B}_2$, where the norm is given by $(\ref{BesovNorm})$, then the reproducing kernel is $$k_w(z) = \int_0^1\int_0^1\frac{1}{1-tz_1\overline{w}_1 - sz_2\overline{w}_2} dtds$$ and this is a complete Nevanlinna-Pick kernel.

\bibliography{Kaluza_references}

\begin{thebibliography}{10}

\bibitem{AMcCpaper}
Jim Agler and John~E. McCarthy.
\newblock Complete {N}evanlinna-{P}ick kernels.
\newblock {\em J. Funct. Anal.}, 175(1):111--124, 2000.

\bibitem{AMcC}
Jim Agler and John~E. McCarthy.
\newblock {\em Pick interpolation and {H}ilbert function spaces}, volume~44 of
  {\em Graduate Studies in Mathematics}.
\newblock American Mathematical Society, Providence, RI, 2002.

\bibitem{AHMcR1}
Alexandru Aleman, Michael Hartz, John~E. McCarthy, and Stefan Richter.
\newblock The {S}mirnov class for spaces with the complete {P}ick property.
\newblock {\em J. Lond. Math. Soc. (2)}, 96(1):228--242, 2017.

\bibitem{Convexity}
Alexandru Aleman, Michael Hartz, John~E. McCarthy, and Stefan Richter.
\newblock Radially weighted {B}esov spaces and the {P}ick property.
\newblock In {\em Analysis of operators on function spaces}, Trends Math.,
  pages 29--61. Birkh\"{a}user/Springer, Cham, 2019.

\bibitem{AleMal}
Alexandru Aleman and Bartosz Malman.
\newblock Hilbert spaces of analytic functions with a contractive backward
  shift.
\newblock {\em J. Funct. Anal.}, 277(1):157--199, 2019.

\bibitem{BaBoFa1}
Joseph~A. Ball, Vladimir Bolotnikov, and Quanlei Fang.
\newblock Transfer-function realization for multipliers of the {A}rveson space.
\newblock {\em J. Math. Anal. Appl.}, 333(1):68--92, 2007.

\bibitem{BaBoFa2}
Joseph~A. Ball, Vladimir Bolotnikov, and Quanlei Fang.
\newblock Schur-class multipliers on the {A}rveson space: de
  {B}ranges-{R}ovnyak reproducing kernel spaces and commutative
  transfer-function realizations.
\newblock {\em J. Math. Anal. Appl.}, 341(1):519--539, 2008.

\bibitem{Cohen}
Guy Cohen, Christophe Cuny, and Michael Lin.
\newblock On convergence of power series of {$L_p$} contractions.
\newblock In {\em \'{E}tudes op\'{e}ratorielles}, volume 112 of {\em Banach
  Center Publ.}, pages 53--86. Polish Acad. Sci. Inst. Math., Warsaw, 2017.

\bibitem{deBRo}
Louis de~Branges and James Rovnyak.
\newblock {\em Square summable power series}.
\newblock Holt, Rinehart and Winston, New York-Toronto, Ont.-London, 1966.

\bibitem{FriMash1}
Emmanuel Fricain and Javad Mashreghi.
\newblock {\em The theory of {$\mathcal H(b)$} spaces. {V}ol. 1}, volume~20 of
  {\em New Mathematical Monographs}.
\newblock Cambridge University Press, Cambridge, 2016.

\bibitem{FriMash2}
Emmanuel Fricain and Javad Mashreghi.
\newblock {\em The theory of {$\mathcal{H}(b)$} spaces. {V}ol. 2}, volume~21 of
  {\em New Mathematical Monographs}.
\newblock Cambridge University Press, Cambridge, 2016.

\bibitem{Gomilko}
Alexander Gomilko, Markus Haase, and Yuri Tomilov.
\newblock Bernstein functions and rates in mean ergodic theorems for operator
  semigroups.
\newblock {\em J. Anal. Math.}, 118(2):545--576, 2012.

\bibitem{Horn}
Roger~A. Horn.
\newblock On moment sequences and renewal sequences.
\newblock {\em J. Math. Anal. Appl.}, 31:130--135, 1970.

\bibitem{JuMa}
M.~T. Jury and R.~T.~W. Martin.
\newblock Aleksandrov-{C}lark theory for {D}rury-{A}rveson space.
\newblock {\em Integral Equations Operator Theory}, 90(4):Paper No. 45, 42,
  2018.

\bibitem{Jury}
Michael~T. Jury.
\newblock Clark theory in the {D}rury-{A}rveson space.
\newblock {\em J. Funct. Anal.}, 266(6):3855--3893, 2014.

\bibitem{Kaluza}
Th. Kaluza.
\newblock \"{U}ber die {K}oeffizienten reziproker {P}otenzreihen.
\newblock {\em Math. Z.}, 28(1):161--170, 1928.

\bibitem{Kendall}
David~G. Kendall.
\newblock Renewal sequences and their arithmetic.
\newblock In {\em Symposium on {P}robability {M}ethods in {A}nalysis
  ({L}outraki, 1966)}, pages 147--175. Springer, Berlin, 1967.

\bibitem{Kingman}
J.~F.~C. Kingman.
\newblock {\em Regenerative phenomena}.
\newblock Wiley Series in Probability and Mathematical Statistics. John Wiley
  \& Sons, Ltd., London-New York-Sydney, 1972.

\bibitem{Letac}
G\'{e}rard Letac.
\newblock Recurrence for products of renewal sequences.
\newblock {\em Ann. Probability}, 5(4):591--594, 1977.

\bibitem{GuLuoR}
Shuaibing Luo, Caixing Gu, and Stefan Richter.
\newblock Higher order local {D}irichlet integrals and de {B}ranges--{R}ovnyak
  spaces.
\newblock {\em Adv. Math.}, 385:Paper No. 107748, 47, 2021.

\bibitem{Pitman}
Jim Pitman and Wenpin Tang.
\newblock Regenerative random permutations of integers.
\newblock {\em Ann. Probab.}, 47(3):1378--1416, 2019.

\bibitem{Sara}
Donald Sarason.
\newblock {\em Sub-{H}ardy {H}ilbert spaces in the unit disk}, volume~10 of
  {\em University of Arkansas Lecture Notes in the Mathematical Sciences}.
\newblock John Wiley \& Sons, Inc., New York, 1994.
\newblock A Wiley-Interscience Publication.

\bibitem{Shanbhag}
D.~N. Shanbhag.
\newblock On renewal sequences.
\newblock {\em Bull. London Math. Soc.}, 9(1):79--80, 1977.

\end{thebibliography}
\end{document}